\newcommand{\er}{\epsilon}
\newcommand{\ex}{e}
\newcommand{\xx}{\mathbf{x}}
\newcommand{\yy}{\mathbf{y}}
\newcommand{\tr}{\ast}
\newcommand{\bb}{\mathbf{b}}
\newcommand{\zz}{\mathbf{z}}
\newcommand{\uu}{\mathbf{u}}
\newcommand{\vv}{\mathbf{v}}
\newcommand{\ww}{\mathbf{w}}
\newcommand{\rr}{\mathbf{r}}
\newcommand{\Tr}{{\text{\upshape Tr}}}
\newcommand{\KK}{\mathbb{C}}
\newcommand{\Span}{\mathrm{Span}}
\newcommand{\bigO}{\text{\upshape O}}
\newtheorem{algorithm}{Algorithm}
\newtheorem{remark}[theorem]{Remark}
\newtheorem{assumption}{Assumption}
\title{Computing Isolated Singular Solutions of Polynomial Systems:
Case of Breadth One\thanks{\quad This research is  supported
 by the Chinese National Natural Science Foundation under grant
    NSFC60821002/F02, 60911130369 and 10871194.} } 
\author{Nan Li         and
        Lihong Zhi  \thanks{ Key Lab of Mathematics Mechanization, AMSS, Beijing 100190,
        China,
        linan08@amss.ac.cn, lzhi@mmrc.iss.ac.cn}}
\begin{document}
\maketitle

\begin{abstract}
We present a symbolic-numeric method to refine an approximate
isolated singular solution $\hat{\mathbf{x}}=(\hat{x}_{1}, \ldots,
\hat{x}_{n})$ of a polynomial system $F=\{f_1, \ldots, f_n\}$ when
the Jacobian matrix of $F$ evaluated at $\hat{\mathbf{x}}$ has
corank one approximately. Our new approach is based on the
regularized Newton iteration and the computation of approximate Max
Noether conditions satisfied at the approximate singular solution.
The size of matrices involved in our algorithm is bounded by $n
\times n$. The algorithm converges quadratically if  $\hat{\xx}$ is
close   to the isolated exact  singular solution.
\end{abstract}

\begin{keywords}
Root refinement, isolated singular solution, regularized Newton
iteration, Max Noether space, quadratic convergence.

\end{keywords}

\begin{AMS}\end{AMS}

\pagestyle{myheadings}
\thispagestyle{plain}

\section{Introduction}
\label{sec:intro}
\paragraph{\bf{Motivation and problem statement}}
Consider an ideal $I$ generated by a polynomial system $F=\{f_1,
\ldots, f_n\}$, where $f_i\in \mathbb{C}[x_1,\ldots, x_n]$.
Suppose $\hat \xx=\hat{\xx}_{\ex}+\hat{\xx}_{\er}$, where
$\hat{\xx}_{\ex}$ denotes the isolated exact singular solution of
$F$ and $\hat{\xx}_{\er}$ denotes the error in the solution. The
{\it multiplicity} $\mu$ of $\hat{\xx}_{\ex}$ is defined as
$\mu=\dim(\mathbb{C}[\xx]/Q)$, where  $Q$ is the isolated primary
component whose associate prime ideal is $P=(x_1-\hat{x}_{1,\ex},
\ldots, x_n-\hat{x}_{n,\ex})$,  and  the {\it index} $\rho$ of
$\hat{\xx}_{\ex}$ is defined as the minimal nonnegative integer
$\rho$ such that $P^\rho \subseteq Q$ \cite{Waerden:1970}.

In \cite{WuZhi:2008, WuZhi:2009}, they compute the truncated
coefficient matrix of the involutive system to the order $\rho$, and
generate the multiplication matrices from its approximate null
vectors. Then
a basis of the approximate Max Noether space (Definition \ref{dfn1})
of $I$ at $\hat{\xx}$ can be obtained from these vectors (
 Theorem
5.4 in \cite{WuZhi:2008}).
Let $\hat \yy$ be the vector whose $i$-th element is the average of
the trace of the multiplication matrix with respect to $x_i$. In
\cite{WuZhi:2009}, it has been proved 
 that if the given approximation
$\hat{\xx}$ 
satisfies $ \|\hat \xx- \hat \xx_{\ex}\|=\varepsilon$, for a small
positive number $\varepsilon$, and the index $\rho$ and the
multiplicity $\mu$ are computed correctly, then the refined solution
obtained by adding $\hat \yy$ to $\hat \xx$ will satisfy $\|\hat
\xx+\hat \yy- \hat \xx_{\ex}\|= \bigO(\varepsilon^2) $. Here and
hereafter, $\|\cdot\|$ is denoted as the $l^2$-norm. The size of
these coefficient matrices in \cite{WuZhi:2009} is bounded by
$n\tbinom{\rho+n}{n}\times \tbinom{\rho+n}{n}$ which will be very
big when $\rho$ is large. Especially, when the corank of the
Jacobian
$F'(\hat{\xx}_{\ex})$ is one,
then
$\rho=\mu$, which is also called the breadth one case in~\cite{
DLZ:2009,DaytonZeng:2005}.

In \cite{LiZhi:2009}, we present a new algorithm which is based on
Stetter's strategies \cite{Stetter:2004} for computing a closed
basis $L=\{L_0, \ldots, L_{\mu-1}\}$ of the approximate Max Noether
space of $I=(f_1, \ldots, f_n)$ at $\hat \xx$ incrementally in the
breadth one case. The size of matrices we used in computing each
order of Max Noether conditions is bounded by $n \times (n-1)$,
which does not depend on the multiplicity. Moreover, during the
computation, we only need to store the input polynomial system $F$,
 the last $n-1$ columns of the Jacobian $F'(\hat{\xx})$ and
the computed Max Noether conditions. Therefore, in the breadth one
case, both storage space and execution time for computing  a closed
basis of the approximate Max Noether space are reduced significantly by the
algorithm in \cite{LiZhi:2009}.   This motivates us to consider
whether we can get rid of  large coefficient matrices in
\cite{WuZhi:2008, WuZhi:2009} and refine approximate singular
solutions more efficiently based on the computed Max Noether
conditions.

\paragraph{\bf{Main contribution}}

Suppose we are given an approximate  singular solution $\hat{\xx}$
of a polynomial system $F$ satisfying $ \|\hat \xx- \hat
\xx_{\ex}\|=\varepsilon$,  where the positive number $\varepsilon$
is small enough such that  there are no other solutions of $F$
nearby. We also assume that the corank of the Jacobian matrix
$F'(\hat{\xx}_{\ex})$ is one.  In order to restore the quadratic
convergence of the Newton method, we first apply one regularized
Newton iteration (in Section \ref{sec:31}) to obtain a new
approximation $\hat \xx+\hat \yy$ which also satisfies the
assumptions above, and then compute the approximate null vector
$\rr_1$ of the Jacobian $F'(\hat \xx+\hat \yy)$ which gives
 a generalized Newton direction, and  the step length $\delta$ is
obtained  by solving a linear system formulated by the computed
differential operators using the algorithm in \cite{LiZhi:2009}. We
show that $\|\hat \xx+\hat \yy+ \delta \rr_1- \hat \xx_{\ex}
\|=\bigO(\varepsilon^2)$. The size of matrices involved in our
algorithm is bounded by $n\times n$.
The method has been
implemented in Maple. Moreover, we also prove the conjecture in
\cite{DaytonZeng:2005} that  the breadth one depth-deflation always
terminates at step $\mu-1$, where $\mu$ is the multiplicity.

\paragraph{\bf{Structure of the paper}}
Section \ref{sec:prelim} is devoted to recall some notations and
well-known facts.
In Section \ref{sec:refineroot}, we describe an 
algorithm for refining approximate isolated singular solutions of polynomial systems in the breadth one case.
Moreover, we prove that the algorithm 
 converges quadratically if the
approximate solution is 
close to the isolated exact  
singular solution. 
Some experiment results are given in Section \ref{lsec:complexity}.
We mention some ongoing research  in Section \ref{lsec:conclusion}.

\section{Preliminaries}\label{sec:prelim}
Let $D(\alpha)=D(\alpha_1,
\ldots, \alpha_n): \mathbb{C}[\xx]\rightarrow \mathbb{C}[\xx]$
denote the differential operator defined by:
\begin{equation*}
D(\alpha_1,\ldots,\alpha_n):=\frac{1} {\alpha_1!\cdots
\alpha_n!}\frac{\partial^{\alpha_1+\cdots+\alpha_n}}{\partial
x_1^{\alpha_1}\cdots
\partial x_n^{\alpha_n}},
\end{equation*}
for nonnegative integer array $\alpha=[\alpha_1, \ldots,
\alpha_n]$.
 We write
$\mathfrak{D}=\{D(\mathbf{\alpha}), ~|\mathbf{\alpha}| \geq 0 \}$
and denote by $\Span_\mathbb{C}(\mathfrak{D})$ the
$\mathbb{C}$-vector space generated by $\mathfrak{D}$. Introducing
a morphism on $\mathfrak{D}$ that acts as ``integral":
\begin{align*}
\Phi_j(D(\alpha)):=\left\{
\begin{array}{ll}
D(\alpha_1, \ldots, \alpha_{j}-1, \ldots, \alpha_n), & \mbox{if $\alpha_{j} > 0$,}\\
0, & \mbox{otherwise.}
\end{array}
\right.
\end{align*}

As a counterpart of the anti-differentiation operator $\Phi_{j}$, we
define the differential operator $\Psi_{j}$  as
\begin{align*}
\Psi_{j}(D(\alpha)):=D(\alpha_1, \ldots, \alpha_{j}+1, \ldots,
\alpha_n).
\end{align*}

\begin{definition}\label{dfn1}
Given a zero $\hat{\xx}_{\ex}$ of an ideal $I=(f_1, \ldots, f_n)$,
we define the Max Noether space~\cite{MollerTenberg:2001} of $I$ at
$\hat{\xx}_{\ex} $ as
\begin{equation*}
\triangle_{\hat{\xx}_{\ex} }(I):=\{L\in
\Span_\mathbb{C}(\mathfrak{D})|\, L(f) _{\xx=\hat{\xx}_{\ex} }=0,
~\forall f\in I\}.
\end{equation*}
\end{definition}
Conditions equivalent to $L(f) _{\xx=\hat{\xx}_{\ex} }=0, ~\forall L
\in \triangle_{\hat{\xx}_{\ex} }(I)$ are called {\it Max Noether
conditions} \cite{MollerTenberg:2001}. The space $\{
L_{\hat{\xx}_{\ex}} ~|~L \in \triangle_{\hat{\xx}_{\ex}}(I) \}$,
where $L_{\hat{\xx}_{\ex}}(f):=L(f)_{\xx=\hat{\xx}_{\ex}}$, is also
called the {\it dual space} of $I$ at $\hat {\xx}_{\ex}$
\cite{DaytonZeng:2005,MMM:1993, MMM:1996, Stetter:1995,
Mourrain:1996, Stetter:2004}.  For a nonnegative integer $k$,
$\triangle_{\hat{\xx}_{\ex} }^{(k)}(I)$ is a subspace of
$\triangle_{\hat{\xx}_{\ex} }(I)$ which consists of differential
operators with differential order bounded by $k$. Obviously,
$\triangle_{\hat{\xx}_{\ex} }^{(0)}(I)=D(0,\ldots,0)$. We have that
\begin{equation}
\dim_\KK(\triangle_{\hat{\xx}_{\ex} }(I))=\mu,
\end{equation}
 where $\mu$ is the
multiplicity of the zero $\hat{\xx}_{\ex} $.

\begin{definition}\cite{MMM:1993,Stetter:2004}
A subspace $\triangle$ of $\Span_\mathbb{C}(\mathfrak{D})$ is said
to be closed if and only if its dimension is finite and
\begin{equation*}
L \in \triangle  \Longrightarrow \Phi_{j}(L) \in
\triangle, ~j=1,\ldots, n.\end{equation*}
\end{definition}
Suppose $\Span(L_0, L_1, \ldots, L_{\mu-1})$ is closed and $L_0,
\ldots, L_{\mu-1}$ are linearly independent differential operators
which satisfy that $L_i(f_j) _{\xx=\hat{\xx}_{\ex} }=0, ~j=1, \ldots, n,
~i=0, \ldots,{\mu-1}$, then due to the closedness, $L_i(q \cdot f_j)
_{\xx=\hat{\xx}_{\ex} }=0, ~\forall q
 \in \mathbb{C}[x_1,\ldots, x_n]$. Hence,
 $\triangle_{\hat{\xx}_{\ex} }(I)=\Span( L_0, L_1, \ldots,
 L_{\mu-1})$.

\begin{lemma}\label{lemma:corankone}
Let $F'(\hat{\xx}_{\ex} )$ be the Jacobian of a polynomial
system $F=\{f_1, \ldots, f_n\}$ evaluated at $\hat{\xx}_{\ex} $.
Suppose the corank of $F'(\hat{\xx}_{\ex} )$ is one, i.e., the
dimension of its null space is one, then
$\dim(\triangle_{\hat{\xx}_{\ex}
}^{(k)}(I))=\dim(\triangle_{\hat{\xx}_{\ex} }^{(k-1)}(I))+1 $ for $1
\leq k \leq \mu-1$ and $\dim(\triangle_{\hat{\xx}_{\ex}
}^{(k)}(I))=\dim(\triangle_{\hat{\xx}_{\ex} }^{(\mu-1)}(I))$,
 for $ k \geq \mu$. Hence we have $\mu=\rho$.
\end{lemma}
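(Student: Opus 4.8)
The plan is to reduce the statement to a one-line computation with a Hilbert function by passing to the local ring and using Macaulay duality. Translate $\hat{\xx}_{\ex}$ to the origin and put $R:=\mathbb{C}[\xx]/Q$ and $\mathfrak{m}:=P/Q$, so that $\dim_{\mathbb{C}}R=\mu$ and, $\rho$ being the index, $\mathfrak{m}^{\rho}=0\neq\mathfrak{m}^{\rho-1}$. Under the classical duality identifying $\triangle_{\hat{\xx}_{\ex}}(I)$ with $\mathrm{Hom}_{\mathbb{C}}(R,\mathbb{C})$ — an operator $L=\sum_{\alpha}c_{\alpha}D(\alpha)$ corresponding to the functional sending the class of $\xx^{\alpha}$ to $c_{\alpha}$ — an operator has differential order $\le k$ if and only if the corresponding functional annihilates $\mathfrak{m}^{k+1}$. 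Hence $\triangle_{\hat{\xx}_{\ex}}^{(k)}(I)\cong\mathrm{Hom}_{\mathbb{C}}(R/\mathfrak{m}^{k+1},\mathbb{C})$ and
\begin{equation*}
\dim_{\mathbb{C}}\triangle_{\hat{\xx}_{\ex}}^{(k)}(I)-\dim_{\mathbb{C}}\triangle_{\hat{\xx}_{\ex}}^{(k-1)}(I)=\dim_{\mathbb{C}}\left(\mathfrak{m}^{k}/\mathfrak{m}^{k+1}\right).
\end{equation*}
For $k=1$ the order-one functionals vanishing on $I$ at the point are $c_{0}D(0)+\sum_{j}c_{j}D(e_{j})$ with $(c_{1},\ldots,c_{n})$ in the null space of $F'(\hat{\xx}_{\ex})$, so this difference is the corank of $F'(\hat{\xx}_{\ex})$; the hypothesis therefore reads $\dim_{\mathbb{C}}(\mathfrak{m}/\mathfrak{m}^{2})=1$.

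The remainder is elementary commutative algebra. Since $\dim_{\mathbb{C}}(\mathfrak{m}/\mathfrak{m}^{2})=1$, Nakayama's lemma forces $\mathfrak{m}=(t)$ for a single element $t$, whence $\mathfrak{m}^{k}=(t^{k})$ and $\dim_{\mathbb{C}}(\mathfrak{m}^{k}/\mathfrak{m}^{k+1})\le 1$ for all $k$; and Nakayama again shows that $\mathfrak{m}^{k}/\mathfrak{m}^{k+1}=0$ implies $\mathfrak{m}^{k}=0$. Together with $\mathfrak{m}^{\rho}=0\neq\mathfrak{m}^{\rho-1}$ this gives $\dim_{\mathbb{C}}(\mathfrak{m}^{k}/\mathfrak{m}^{k+1})=1$ for $0\le k\le\rho-1$ and $=0$ for $k\ge\rho$. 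Summing over $k$ yields $\mu=\dim_{\mathbb{C}}R=\rho$, and taking partial sums gives $\dim_{\mathbb{C}}\triangle_{\hat{\xx}_{\ex}}^{(k)}(I)=k+1$ for $0\le k\le\mu-1$ and $=\mu$ for $k\ge\mu-1$, which is exactly the asserted recursion together with $\mu=\rho$.

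The one step that needs care is the duality identification of the first paragraph. It is standard (Macaulay inverse systems), but should one wish to argue entirely inside $\Span_{\mathbb{C}}(\mathfrak{D})$ with only $\Phi_{j},\Psi_{j}$ at hand, an equivalent route is available; abbreviate $\triangle^{(k)}:=\triangle_{\hat{\xx}_{\ex}}^{(k)}(I)$. Closedness gives $\Phi_{j}(\triangle^{(k)})\subseteq\triangle^{(k-1)}$, and from this one shows $\triangle^{(k)}=\triangle^{(k-1)}\Rightarrow\triangle^{(k+1)}=\triangle^{(k)}$: if $L$ has order $k+1$, then each $\Phi_{j}(L)$ has order $\le k-1$, so the homogeneous top part $L^{(k+1)}$ satisfies $\Phi_{j}(L^{(k+1)})=0$ for every $j$, forcing $L^{(k+1)}=0$ (a nonzero homogeneous operator of positive order is not annihilated by all the $\Phi_{j}$). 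Next, assuming inductively that $\triangle^{(k-1)}/\triangle^{(k-2)}$ is spanned by one homogeneous operator $M$ (the cases $k=1,2$ being immediate, the first directly from the corank computation above), each $L\in\triangle^{(k)}$ has $\Phi_{j}(L^{(k)})=c_{j}M$ for scalars $c_{j}$, the assignment $L^{(k)}\mapsto(c_{1},\ldots,c_{n})$ is injective, and the commutativity relations $\Phi_{i}\Phi_{j}=\Phi_{j}\Phi_{i}$ applied to $L^{(k)}$ force $(c_{1},\ldots,c_{n})$ to be a scalar multiple of a fixed nonzero vector determined by $M$; hence $\dim(\triangle^{(k)}/\triangle^{(k-1)})\le 1$. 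In this second route I expect the commutativity bookkeeping, and verifying that the auxiliary operators stay nonzero throughout the growing range $k\le\rho-1$, to be the delicate part.
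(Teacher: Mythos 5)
Your main argument (the first paragraph through "Summing over $k$\ldots") is correct and complete. The translation to the origin, the identification of $\triangle_{\hat{\xx}_{\ex}}^{(k)}(I)$ with $\mathrm{Hom}_{\mathbb{C}}(R/\mathfrak{m}^{k+1},\mathbb{C})$, the computation that the corank-one hypothesis reads $\dim_{\mathbb{C}}(\mathfrak{m}/\mathfrak{m}^2)=1$, and the Nakayama bootstrap (principality of $\mathfrak{m}$ gives $\dim(\mathfrak{m}^k/\mathfrak{m}^{k+1})\le 1$; vanishing of a graded piece propagates forward) together give the stated Hilbert function $1,1,\ldots,1,0,\ldots$ of length $\rho$, whence $\mu=\rho$ and the asserted recursion for $\dim\triangle^{(k)}$. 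The paper itself dispatches this lemma in one line by citing Stanley's Hilbert-function theorem together with Lemma 1 of Dayton--Zeng; those references encode exactly the two ingredients you prove from scratch (the monotonicity/shape constraint on the local Hilbert function, and the identification of $\dim\triangle^{(k)}-\dim\triangle^{(k-1)}$ with that Hilbert function, with the $k=1$ value being the Jacobian corank). So your proof is not a different route so much as an unpacking of the same route; what it buys is self-containment, at the cost of invoking Macaulay duality explicitly rather than leaving it implicit in the citations. Your sketched second route entirely inside $\Span_{\mathbb{C}}(\mathfrak{D})$ using only $\Phi_j$ and closedness is also sound as outlined (I checked the key steps: a nonzero homogeneous operator of positive order is not killed by every $\Phi_j$; the map $L^{(k)}\mapsto(c_1,\ldots,c_n)$ is injective; and the commutation constraints $c_jd_i=c_id_j$ pin the direction), but you are right that it is more bookkeeping; the duality argument is the cleaner choice and is the one to keep.
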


\begin{proof}
 Lemma~\ref{lemma:corankone}  is  an immediate consequence of
\cite[Theorem 2.2]{Stanley:1973} and \cite[Lemma
1]{DaytonZeng:2005}.
\end{proof}

\begin{theorem}\label{LiZhi}\cite{LiZhi:2009}
Suppose we are given an isolated multiple root $\hat{\xx}_{\ex} $ of
the polynomial system $F=\{f_1, \ldots, f_n\}$ with the multiplicity
$\mu$ and the corank of the Jacobian $F'(\hat{\xx}_{\ex} )$
is one, and $L_1=D(1,0, \ldots, 0) \in \triangle_{\hat{\xx}_{\ex}
}^{(1)}(I)$. We can construct the $k$-th order Max Noether condition
retaining the closedness incrementally for $k$ from $2$ to $\mu-1$
by the following formulas:
\begin{equation}\label{kthcondition}
L_k= P_k+ a_{k,2} D(0,1,\ldots,0)+\cdots+a_{k,n}D(0,\ldots,1),
\end{equation}
where $P_k$ has no free parameters and is obtained from previous
computed $L_1, \ldots, L_{k-1}$ by the following formula:
\begin{equation}\label{fixedkthcondition}
P_k=\Psi_1(L_{k-1})+\Psi_2(Q_{k,2})_{\alpha_1=0}+\cdots+\Psi_n(Q_{k,n})_{\alpha_1=\alpha_2=\cdots=\alpha_{n-1}=0},
\end{equation}
where
\begin{equation}\label{phikth}
\Phi_1(P_k)=L_{k-1}, ~Q_{k,j}=\Phi_{j}(P_k)=a_{2,j} L_{k-2}+\cdots+a_{k-1,j}
 L_1, ~2 \leq j \leq n.
\end{equation}
Here $\Psi_{j}(Q_{k,j})_{\alpha_1=\cdots=\alpha_{j-1}=0}$ means that we
only pick up differential operators $D(\alpha)$ in $Q_{k,j}$ where $\alpha_1=\cdots=\alpha_{j-1}=0$. The
parameters $a_{k,j},~j=2,\ldots, n$  are determined by checking
whether $[P_k(f_1) _{\xx=\hat{\xx}_{\ex} }, \ldots, P_k(f_n)
_{\xx=\hat{\xx}_{\ex} }]^T$ can be written as a linear combination of  the
last $n-1$ linearly independent columns of
$F'(\hat{\xx}_{\ex} )$.
\end{theorem}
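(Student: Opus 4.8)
The plan is to prove the theorem by induction on $k$, for $k=2,\ldots,\mu-1$, carrying along the strengthened claim that $\Span(L_0,\ldots,L_{k-1})=\triangle_{\hat{\xx}_{\ex}}^{(k-1)}(I)$ is closed, that each $L_i$, $1\le i\le k-1$, has differential order exactly $i$ and contains neither a $D(0,\ldots,0)$ nor a $D(1,0,\ldots,0)$ term, and that $\Phi_1(L_i)=L_{i-1}$ while $\Phi_j(L_i)=a_{2,j}L_{i-2}+\cdots+a_{i-1,j}L_1$ for $2\le j\le n$, with the $a_{i,j}$ being exactly the coefficients in \eqref{kthcondition}. Lemma~\ref{lemma:corankone} gives the skeleton: $\dim\triangle_{\hat{\xx}_{\ex}}^{(k)}(I)=\dim\triangle_{\hat{\xx}_{\ex}}^{(k-1)}(I)+1$ for $1\le k\le\mu-1$ and the two dimensions coincide for $k\ge\mu$, so the recursion runs exactly for $k$ from $2$ to $\mu-1$; this last point is also the statement that the breadth-one depth-deflation terminates at step $\mu-1$ (the Dayton--Zeng conjecture). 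The base case $k=2$ is immediate: \eqref{fixedkthcondition} collapses to $P_2=\Psi_1(L_1)=D(2,0,\ldots,0)$, one takes $L_2=P_2+a_{2,2}D(0,1,\ldots,0)+\cdots+a_{2,n}D(0,\ldots,1)$, and $\Phi_1(L_2)=L_1$ together with $\Phi_j(L_2)=a_{2,j}D(0,\ldots,0)\in\triangle_{\hat{\xx}_{\ex}}^{(0)}(I)$ hold for whatever $a_{2,j}$ the membership condition selects.

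Within the inductive step, the key structural fact I would establish first is a leading-form analysis that identifies the new generator at level $k$. Let $\mathcal{G}$ be the $\mathbb{C}$-span of the top-order parts of the elements of $\triangle_{\hat{\xx}_{\ex}}(I)$; filtering by differential order gives $\dim\mathcal{G}^{(k)}=\dim\triangle_{\hat{\xx}_{\ex}}^{(k)}(I)$, so by Lemma~\ref{lemma:corankone} the degree-$k$ graded piece of $\mathcal{G}$ is one dimensional for $0\le k\le\mu-1$; let $\ell_k$ span it. Since $\mathcal{G}$ is closed, $\Phi_j(\ell_k)=c_{k,j}\,\ell_{k-1}$ for scalars $c_{k,j}$, and $\Phi_i\Phi_j=\Phi_j\Phi_i$ forces the $n$-tuples $(c_{k,1},\ldots,c_{k,n})$ for successive $k$ to be proportional; since $L_1=D(1,0,\ldots,0)$ forces $\ell_1=D(1,0,\ldots,0)$, i.e. $(c_{1,1},\ldots,c_{1,n})$ proportional to $(1,0,\ldots,0)$, all these tuples are supported on the first coordinate, so $\ell_k=D(k,0,\ldots,0)$ up to a scalar for every $k\le\mu-1$. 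Hence the unique-up-to-scalars order-$k$ element $L_k$ of $\triangle_{\hat{\xx}_{\ex}}^{(k)}(I)$ has $\Phi_1(L_k)$ with leading part $D(k-1,0,\ldots,0)$, so $\Phi_1(L_k)\equiv c\,L_{k-1}\pmod{\triangle_{\hat{\xx}_{\ex}}^{(k-2)}(I)}$ with $c\ne 0$, while $\Phi_j(L_k)\in\triangle_{\hat{\xx}_{\ex}}^{(k-2)}(I)$ for $j\ge 2$. I would rescale so $c=1$; since by the induction hypothesis $\Phi_1$ maps $\triangle_{\hat{\xx}_{\ex}}^{(k-1)}(I)$ onto $\triangle_{\hat{\xx}_{\ex}}^{(k-2)}(I)$ with kernel $\mathbb{C}D(0,\ldots,0)$, subtracting a suitable element of $\triangle_{\hat{\xx}_{\ex}}^{(k-1)}(I)$ and then multiples of $L_1,L_0$ normalizes $L_k$ so that $\Phi_1(L_k)=L_{k-1}$ exactly and $L_k$ has no $D(0,\ldots,0)$ or $D(1,0,\ldots,0)$ term; two such normalizations share the leading form $D(k,0,\ldots,0)$ and the same $\Phi_1$-image, so their difference lies in $\triangle_{\hat{\xx}_{\ex}}^{(k-1)}(I)\cap\ker\Phi_1=\mathbb{C}D(0,\ldots,0)$ and has no constant term, hence vanishes: the normalized $L_k$ is unique, which is why \eqref{kthcondition} has free parameters only for $j=2,\ldots,n$. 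I expect this leading-form structure to be the main obstacle, since it is where the breadth-one hypothesis and the normalization $L_1=D(1,0,\ldots,0)$ really enter; one could instead invoke the curvilinear structure of the dual space from \cite{Stanley:1973,DaytonZeng:2005}.

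It then remains to extract $P_k$ and the $a_{k,j}$. Writing $L_k=P_k+a_{k,2}D(0,1,\ldots,0)+\cdots+a_{k,n}D(0,\ldots,1)$ with $P_k$ free of constant and first-order terms, we have $\Phi_1(P_k)=L_{k-1}$, and setting $Q_{k,j}:=\Phi_j(P_k)=\Phi_j(L_k)-a_{k,j}D(0,\ldots,0)$ the induction hypothesis for $\Phi_j(L_{k-1})$ gives $Q_{k,j}=a_{2,j}L_{k-2}+\cdots+a_{k-1,j}L_1$, which is \eqref{phikth}. The commutation relations among the $\Phi_i$ together with the induction hypothesis yield the compatibility conditions $\Phi_i(Q_{k,j})=\Phi_j(Q_{k,i})$ (with $Q_{k,1}:=L_{k-1}$), which are exactly what is needed for a differential operator with the prescribed images $\Phi_i(\cdot)=Q_{k,i}$ to exist; any such operator having no $D(0,\ldots,0)$ term is unique, and \eqref{fixedkthcondition}---whose triangular restrictions $\alpha_1=\cdots=\alpha_{j-1}=0$ are chosen precisely to avoid double-counting the overlaps of the terms $\Psi_j(Q_{k,j})$---exhibits it, so it coincides with $P_k$ and carries no new parameter. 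This last verification is routine given the commutation bookkeeping, so I would not grind through it. Finally, $L_k\in\triangle_{\hat{\xx}_{\ex}}^{(k)}(I)$, i.e. $L_k(f_i)_{\xx=\hat{\xx}_{\ex}}=0$ for $i=1,\ldots,n$, is equivalent to $[P_k(f_1)_{\xx=\hat{\xx}_{\ex}},\ldots,P_k(f_n)_{\xx=\hat{\xx}_{\ex}}]^{T}$ being a linear combination of the last $n-1$ columns of $F'(\hat{\xx}_{\ex})$, the $a_{k,j}$ being the negatives of the coefficients; this linear system is consistent because the genuine order-$k$ element of $\triangle_{\hat{\xx}_{\ex}}^{(k)}(I)$ guaranteed by Lemma~\ref{lemma:corankone} must---by the uniqueness just established---equal $P_k+a_{k,2}D(0,1,\ldots,0)+\cdots+a_{k,n}D(0,\ldots,1)$, and its solution is unique because those $n-1$ columns are linearly independent (the corank is one and the first column of $F'(\hat{\xx}_{\ex})$ vanishes since $L_1=D(1,0,\ldots,0)$). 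The resulting $L_k$ satisfies $\Phi_1(L_k)=L_{k-1}$ and $\Phi_j(L_k)=Q_{k,j}+a_{k,j}D(0,\ldots,0)\in\triangle_{\hat{\xx}_{\ex}}^{(k-2)}(I)$, so $\Span(L_0,\ldots,L_k)$ is closed and the induction is complete.
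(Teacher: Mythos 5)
This theorem is quoted verbatim from \cite{LiZhi:2009}; the present paper does not reprove it but simply invokes the citation, so there is no in-text argument against which to compare your reconstruction. On the merits your proof is sound and contains the right ingredients: in the breadth-one case the associated graded of the dual space is one-dimensional in each order up to $\mu-1$ with generator $D(k,0,\ldots,0)$ once one normalizes $L_1=D(1,0,\ldots,0)$ (your leading-form and $\Phi$-commutation argument, or equivalently the curvilinear structure from \cite{Stanley:1973,DaytonZeng:2005} already packaged in Lemma~\ref{lemma:corankone}); this pins down a unique order-$k$ operator with $\Phi_1(L_k)=L_{k-1}$ and no constant term; the anti-derivative formula (\ref{fixedkthcondition}) then reconstructs $P_k$ from the compatible family $Q_{k,1}=L_{k-1}$, $Q_{k,j}=\Phi_j(P_k)$, and the $a_{k,j}$ are the unique solution of the $n\times(n-1)$ linear system since the last $n-1$ columns of $F'(\hat{\xx}_{\ex})$ are independent (corank one plus $L_1=D(1,0,\ldots,0)$ forcing the first column to vanish). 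The bookkeeping you elect not to grind through — that the triangular restrictions $\alpha_1=\cdots=\alpha_{j-1}=0$ make (\ref{fixedkthcondition}) pick up each monomial exactly once — is indeed routine and your explanation of it is the correct one.

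Three small points to tidy. In the inductive package you state $\Phi_j(L_i)=a_{2,j}L_{i-2}+\cdots+a_{i-1,j}L_1$; this drops the constant summand $a_{i,j}L_0$, which you in fact restore later when you write $\Phi_j(L_k)=Q_{k,j}+a_{k,j}D(0,\ldots,0)$, so the hypothesis as carried should include the $L_0$ term. In the normalization step you need not (and should not) subtract a multiple of $L_1$: once $\Phi_1(L_k)=L_{k-1}$ is arranged, the coefficient of $D(1,0,\ldots,0)$ in $L_k$ equals the constant term of $L_{k-1}$, which is zero by induction, so it vanishes automatically; only a multiple of $L_0$ is needed. Finally, the parenthetical identifying Lemma~\ref{lemma:corankone} with the Dayton--Zeng conjecture overstates things: the lemma controls the dimensions of the truncated dual (hence terminates the recursion at step $\mu-1$), whereas the conjecture is the sharper multiplicity-drop statement proved later as Corollary~\ref{coro}. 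None of these affects the validity of your argument.
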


Suppose $\hat \xx$ is an approximation of $\hat{\xx}_{\ex}$ and
$\|\hat{\xx}-\hat{\xx}_{\ex}\|=\varepsilon \ll 1$, we can use
the algorithm {\sf MultiplicityStructureBreadthOneNumeric} in
\cite{LiZhi:2009} to compute a closed basis $\{L_0,
\ldots, L_{\mu-1}\}$ of the approximate Max Noether space of $I$ at $\hat{\xx}$.
Since the errors in the matrix of the linear system
\[
\left[P_k(F)_{\xx=\hat{\xx}},\frac{\partial F(\hat{\xx})}{\partial x_2},\ldots,\frac{\partial F(\hat{\xx})}{\partial x_n}\right]\cdot [1,a_{k,2},\ldots,a_{k,n}]^T=0,
\]
used in Theorem \ref{LiZhi}
are bounded by  $\bigO(\varepsilon)$ and
\[
L_k= P_k+ a_{k,2}
D(0,1,0,\ldots,0)+\cdots+a_{k,n}D(0,\ldots,0,1),\]
is determined by its right singular vector $[1,a_{k,1},\ldots,a_{k,n}]^T$ corresponding to
its smallest singular value, we have
\begin{equation}\label{aL_k}
\|L_k(F)_{\xx=\hat{\xx}}\|=\bigO(\varepsilon),
\end{equation}
 according to
\cite[Corollary 8.6.2]{Golub:1996}.
\section{An Algorithm for Refining Approximate Singular Solutions}
\label{sec:refineroot}Suppose  we are given
 an approximate solution
\begin{equation*}
 \hat
\xx=\hat{\xx}_{\ex}+\hat{\xx}_{\er}, \end{equation*}
 where  $\hat{\xx}_{\er}$ denotes the error in the solution and
 $\hat{\xx}_{\ex}$
denotes the exact  solution of the polynomial system $F=\{f_1,
\ldots, f_n\}$ with the multiplicity $\mu$  and the index $\rho$. In
this section, we present a new method to refine $\hat \xx$  in the
breadth one case, i.e., $\mu=\rho$.

Let $A=F'(\hat{\xx})$ be the  Jacobian matrix of $F$ evaluated at
$\hat \xx$ and $\bb=-F(\hat{\xx})$.
Suppose the error in the solution is small enough, i.e., $ \|\hat{\xx}-\hat{\xx}_{\ex}\|=\varepsilon\ll 1$, 
and $A$ is invertible, then Newton's iteration computes
\begin{equation}\label{newton}
\hat \yy= {A}^{-1} \bb,
\end{equation}
and $\|\hat \xx+\hat \yy-\hat \xx_{\ex} \|=\bigO(\varepsilon^2)$
according to the well-known Kantorovich theorem \cite{KA64}.
However, if ${A}$ is singular, then the convergence of  Newton
iterations is linear rather than quadratic.

Rall~\cite{Rall66}  studied  the convergence properties of Newton's
method at singular points. Some modifications of Newton's method to
restore quadratic convergence have also been proposed
in~\cite{Chen97,DeckerKelley:1980I,DeckerKelley:1980II,DeckerKelley:1982,
Griewank:1980, Griewank85, GriewankOsborne:1981, Ojika:1987,
OWM:1983, Reddien:1978,Reddien:1980,  ShenYpma05}.
In~\cite{Griewank85},  a bordered system was introduced to restore
the quadratic convergence of Newton's method when ${A}$ has corank
one approximately and $\hat \xx$ is a simple singular solution.  It
is clear to see  that the regularity condition in~\cite{Griewank85}
can not be satisfied if the multiplicity is larger than $2$.

For simplicity, we make an assumption throughout this  section.


\begin{assumption}\label{assump}
 Suppose we
are given an approximate  singular solution $\hat{\xx}$ of a
polynomial system $F$ satisfying $ \|\hat \xx- \hat
\xx_{\ex}\|=\varepsilon$,  where the positive number $\varepsilon$
is small enough such that  there are no other solutions of $F$
nearby. Moreover, we assume that the corank of the Jacobian matrix
$F'(\hat{\xx}_{\ex})$ is one.
\end{assumption}

 Let
$A=F'(\hat{\xx})$ be the  Jacobian matrix of $F$ evaluated at $\hat
\xx$ and its singular values be $\sigma_1, \ldots, \sigma_s$. Under
Assumption \ref{assump}, we have
$\|F(\hat{\xx})\|=\bigO(\varepsilon)$, $\sigma_{i}=\Theta(1), ~1
\leq i \leq n-1$ and $\sigma_n=\bigO(\varepsilon)$.

\begin{remark}
Notice that the  notation $\bigO(g)$  denotes that the value is
bounded above by $g$ up to a constant factor, while $\Theta(g)$
denotes that the value is bounded both above and below by $g$ up to
constant factors.
\end{remark}

\subsection{Regularized Newton iteration}
\label{sec:31}

Under Assumption \ref{assump},   $F'(\hat \xx)$ is approximately
singular. Instead of using (\ref{newton}) to compute $\hat \yy$, we
apply
 Tikhonov regularization~\cite{Tychonoff77} to solve the minimization problem
\begin{equation*}
\mathrm{min}~\|{A}\yy-\bb\|^2+ \lambda \|\yy\|^2,
\end{equation*}
to obtain $\hat \yy$, where $A=F'(\hat \xx)$ and $\bb=-F(\hat{\xx})$. The real number
$\lambda>0$ is called the regularization parameter.

\begin{theorem}[{\sf Regularized Newton Iteration}]\label{thm:prenewton}
Under Assumption~\ref{assump}, if we choose the smallest singular
value $\sigma_n$ of $F'(\hat \xx)$ as the regularization parameter,
the solution $\hat{\yy}$ of the following regularized least squares
problem
\begin{equation}\label{Lagrange}
({A}^{\tr} {A}+\sigma_n I_n)\hat{\yy} ={A}^{\tr} \bb
\end{equation}
satisfies
\begin{equation}\label{updatey}
\|\hat \yy\|=\bigO(\varepsilon), ~
\|F(\hat{\xx}+\hat{\yy})\|=\bigO(\varepsilon^2),
\end{equation}
where $A^{\tr}$ is the Hermitian (conjugate) transpose of $A=F'(\hat
\xx)$, $I_n$ is the $n\times n$ identity matrix and $\bb=-F(\hat{\xx})$.
\end{theorem}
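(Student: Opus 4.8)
The plan is to analyze the regularized least-squares solution $\hat\yy=(A^{\tr}A+\sigma_n I_n)^{-1}A^{\tr}\bb$ through the singular value decomposition of $A=F'(\hat\xx)$. Write $A=U\Sigma V^{\tr}$ with $\Sigma=\mathrm{diag}(\sigma_1,\ldots,\sigma_n)$, where under Assumption~\ref{assump} we have $\sigma_i=\Theta(1)$ for $1\le i\le n-1$ and $\sigma_n=\bigO(\varepsilon)$. In the $V$-coordinate basis, the filter factors become $\sigma_i^2/(\sigma_i^2+\sigma_n)$; for $i\le n-1$ this is $1-\bigO(\sigma_n)=1-\bigO(\varepsilon)$, while for $i=n$ it is $\sigma_n^2/(\sigma_n^2+\sigma_n)=\sigma_n/(1+\sigma_n)=\bigO(\varepsilon)$. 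Thus in this basis, $\hat\yy$ agrees with the full Newton step $A^{+}\bb$ in the first $n-1$ components up to a relative perturbation of size $\bigO(\varepsilon)$, and its last component is damped to $\bigO(\varepsilon)\cdot(\text{last component of }U^{\tr}\bb/\sigma_n)$.

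First I would establish the bound $\|\hat\yy\|=\bigO(\varepsilon)$. Since $\|\bb\|=\|F(\hat\xx)\|=\bigO(\varepsilon)$, the components $(U^{\tr}\bb)_i$ are all $\bigO(\varepsilon)$. For $i\le n-1$ the contribution to $\hat\yy$ is $\bigl(\sigma_i/(\sigma_i^2+\sigma_n)\bigr)(U^{\tr}\bb)_i=\bigO(\varepsilon)$ because $\sigma_i=\Theta(1)$. For $i=n$ the contribution is $\bigl(\sigma_n/(\sigma_n^2+\sigma_n)\bigr)(U^{\tr}\bb)_n=\bigl(1/(1+\sigma_n)\bigr)(U^{\tr}\bb)_n=\bigO(\varepsilon)$. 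Summing gives $\|\hat\yy\|=\bigO(\varepsilon)$.

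Next I would bound $\|F(\hat\xx+\hat\yy)\|$. Taylor-expanding, $F(\hat\xx+\hat\yy)=F(\hat\xx)+A\hat\yy+\bigO(\|\hat\yy\|^2)=-\bb+A\hat\yy+\bigO(\varepsilon^2)$. So it suffices to show $\|A\hat\yy-\bb\|=\bigO(\varepsilon^2)$. In the SVD basis this residual has $i$-th component $\bigl(\sigma_i^2/(\sigma_i^2+\sigma_n)-1\bigr)(U^{\tr}\bb)_i=-\bigl(\sigma_n/(\sigma_i^2+\sigma_n)\bigr)(U^{\tr}\bb)_i$. For $i\le n-1$ this is $-\bigl(\sigma_n/\Theta(1)\bigr)\bigO(\varepsilon)=\bigO(\varepsilon^2)$ since $\sigma_n=\bigO(\varepsilon)$. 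The delicate term is $i=n$: the residual component is $-\bigl(\sigma_n/(\sigma_n^2+\sigma_n)\bigr)(U^{\tr}\bb)_n=-\bigl(1/(1+\sigma_n)\bigr)(U^{\tr}\bb)_n$, which is only $\bigO(\varepsilon)$, not $\bigO(\varepsilon^2)$ — so a naive residual argument fails. The main obstacle, and the crux of the proof, is therefore to show that $(U^{\tr}\bb)_n=\bigO(\varepsilon^2)$, i.e. that $\bb=-F(\hat\xx)$ is \emph{already} almost orthogonal to the left singular vector $u_n$ associated with the vanishing singular value. I would argue this using the structure of the singular solution: the left null vector $u_n$ of $F'(\hat\xx)$ is an $\bigO(\varepsilon)$-perturbation of the left null vector of $F'(\hat\xx_{\ex})$, and since $F(\hat\xx)=F(\hat\xx_{\ex})+F'(\hat\xx_{\ex})\hat\xx_{\er}+\bigO(\varepsilon^2)=F'(\hat\xx_{\ex})\hat\xx_{\er}+\bigO(\varepsilon^2)$ lies (up to $\bigO(\varepsilon^2)$) in the range of $F'(\hat\xx_{\ex})$, which is orthogonal to its left null vector, the component of $\bb$ along $u_n$ is $\bigO(\varepsilon^2)+\bigO(\varepsilon)\cdot\bigO(\varepsilon)=\bigO(\varepsilon^2)$. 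Feeding this back, the $i=n$ residual component becomes $\bigO(\varepsilon^2)$, and combining with the $i\le n-1$ estimates and the Taylor remainder yields $\|F(\hat\xx+\hat\yy)\|=\bigO(\varepsilon^2)$, completing the proof.
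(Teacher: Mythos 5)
Your proposal is correct and follows the same overall plan as the paper's proof: both work in the SVD basis of $A=F'(\hat{\xx})$, bound $\|\hat{\yy}\|$ via the filter factors $\sigma_i/(\sigma_i^2+\sigma_n)$, reduce the second claim to the residual bound $\|A\hat{\yy}-\bb\|=\bigO(\varepsilon^2)$, identify the $n$-th residual component as the delicate one, and reduce everything to showing that the last component $\tilde{b}_n$ of $U^{\tr}\bb$ is $\bigO(\varepsilon^2)$. The only divergence is in how that fact is established. You Taylor-expand $F$ at $\hat{\xx}_{\ex}$, note that $F(\hat{\xx})$ lies (up to $\bigO(\varepsilon^2)$) in the range of $F'(\hat{\xx}_{\ex})$ and hence is nearly orthogonal to the exact left null vector, and then invoke a singular-vector perturbation bound to transfer this near-orthogonality to $u_n$. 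The paper avoids singular-vector perturbation entirely: it Taylor-expands $F$ at $\hat{\xx}$ to get $\|-\bb+A(\hat{\xx}_{\ex}-\hat{\xx})\|=\bigO(\varepsilon^2)$, multiplies by $U^{\tr}$ to get $\|-U^{\tr}\bb+\Sigma V^{\tr}(\hat{\xx}_{\ex}-\hat{\xx})\|=\bigO(\varepsilon^2)$, and reads off the $n$-th component, using $\sigma_n=\bigO(\varepsilon)$ and $\|V^{\tr}(\hat{\xx}_{\ex}-\hat{\xx})\|=\varepsilon$ to conclude $|\tilde{b}_n|=\bigO(\varepsilon^2)$. Your route is valid because the $\Theta(1)$ gap below $\sigma_{n-1}$ keeps the perturbation of the last left singular vector at $\bigO(\varepsilon)$, but it introduces an extra lemma; the paper's direct computation, which stays entirely in the SVD frame of $A$, is more economical.
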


\begin{proof}
Suppose ${A}=U \cdot \Sigma \cdot V^{\tr}$ is the singular value
decomposition of ${A}$
 where $\Sigma= \diag\{\sigma_1,\ldots,\sigma_n\}$, then the
 solution of (\ref{Lagrange}) is
\begin{equation}\label{yy}
 \hat \yy=V \cdot ({\Sigma}^2+\sigma_n I_n)^{-1}\cdot\Sigma\cdot
 U^{\tr}\cdot\bb.
 \end{equation}
Since $\sigma_{i}=\Theta(1), ~1 \leq i \leq n-1$,
$\sigma_n=\bigO(\varepsilon)$ and $\|\bb\|=\bigO(\varepsilon)$, we
have
\begin{equation*}
\|\hat \yy\|^2=\sum_{i=1}^{n} \left(\frac{\sigma_i
|\tilde{b}_i|}{\sigma_{i}^{2}+\sigma_n}\right)^2=\bigO(\varepsilon^2),
\end{equation*}
where  $\tilde{\bb}=[\tilde b_1, \ldots, \tilde b_n]^T=U^{\tr} \bb$
and $\|\tilde{\bb}\|=\|\bb\|=\bigO(\varepsilon)$. Hence, $\|\hat
\yy\|=\bigO(\varepsilon)$.

From the Taylor expansion of $F$ at $\hat \xx$, we have \[ F(\hat
\xx_{\ex})=-\bb+A(\hat \xx_{\ex}-\hat{\xx})+\bigO(\varepsilon^2).\]
Hence
\[\|-\bb+A(\hat \xx_{\ex}-\hat{\xx})\|=\bigO(\varepsilon^2).\]
Furthermore, we have
\[\|-U^{\tr} \bb+\Sigma\cdot V^{\tr}(\hat \xx_{\ex}-\hat{\xx})\|=\bigO(\varepsilon^2).\]
Since $\sigma_n=\bigO(\varepsilon)$ and
$\|V^{\tr}(\hat{\xx}-\hat{\xx}_{\ex})\|=\|\hat{\xx}-\hat{\xx}_{\ex}\|=\varepsilon$, we derive that the last
component of the vector $\tilde{\bb}$ satisfies
\begin{equation}\label{bs}
|\tilde{b}_n|=\bigO(\varepsilon^2).
\end{equation}
Since
\[A \hat
\yy-\bb=U\cdot\diag\left\{\frac{-\sigma_n}{\sigma_{1}^{2}+\sigma_n},
\ldots, \frac{-\sigma_n}{\sigma_{n}^{2}+\sigma_n}\right\}\cdot
\tilde{\bb},
\]
we have
\[\|A \hat
\yy-\bb\|^2=\sum_{i=1}^{n}\left(\frac{\sigma_n
|\tilde{b}_i|}{\sigma_{i}^{2}+\sigma_n}\right)^2,\]
where
\[\frac{\sigma_n}{\sigma_{i}^{2}+\sigma_n}=\bigO(\varepsilon), ~
{\rm for} ~ i=1,\ldots, n-1,\]
 and $\|\tilde{\bb}\|=\bigO(\varepsilon)$.
Although \[\frac{\sigma_n}{\sigma_{n}^{2}+\sigma_n}=\Theta(1),\] we
have from (\ref{bs}) that $|\tilde{b}_{n}|=\bigO(\varepsilon^2)$, hence
\begin{equation}\label{res}
\|A \hat \yy-\bb\|=\bigO(\varepsilon^2).
\end{equation}
Finally, from the Taylor expansion of $F$ at $\hat \xx$, we have
\[
\|F(\hat{\xx}+\hat{\yy})\|\leq
\|-\bb+{A}\hat{\yy}\|+\bigO(\varepsilon^2)=\bigO(\varepsilon^2).
\]
\end{proof}

 According to Theorem~\ref{thm:prenewton}, after  applying one regularized Newton
iteration to $F$ and $\hat{\xx}$, we  get $\hat{\yy}$
satisfies (\ref{updatey}), and  the new approximate singular
solution $\hat{\xx}+\hat{\yy}$ satisfies
\begin{equation*}
\|\hat{\xx}+\hat{\yy}-\hat{\xx}_{\ex}\|\leq
\|\hat{\xx}-\hat{\xx}_{\ex}\|+\|\hat{\yy}\|=\varepsilon+\bigO(\varepsilon).
\end{equation*}
If \begin{equation*}
\|\hat{\xx}+\hat{\yy}-\hat{\xx}_{\ex}\|=\bigO(\varepsilon^2),
\end{equation*}
 then
we have already achieved the quadratic convergence. However,  the
convergence rate of the regularized Newton iteration is linear
too when  the Jacobian matrix is near singular.  Hence, in most
cases, we will have
\begin{equation}\label{rnewton}
 \|\hat{\xx}+\hat{\yy}-\hat{\xx}_{\ex}\|=\Theta(\varepsilon).
\end{equation}
We  show below how to restore  the quadratic convergence when the
computed approximate singular solution $\hat \xx+ \hat \yy$
satisfies (\ref{updatey}) and  (\ref{rnewton}).

 If $L_1 \in \triangle_{\hat{\xx}+\hat \yy }^{(1)}(I)$ is not $D(1,0, \ldots,
0)$, as pointed out by Stetter in \cite{Stetter:2004},
 we  can compute the right singular vector of $F'(\hat{\xx}+\hat \yy )$ corresponding
to its smallest singular value $\sigma'_n$, denoted by $\rr_1$ satisfying  $\|\rr_1\|=1$ and
\begin{equation}\label{definer1}
\|F'(\hat{\xx}+\hat \yy ) \, \rr_1\|=\sigma'_n=
\bigO(\varepsilon).\end{equation}

Let us form a unitary  matrix $R=[\rr_1, \ldots, \rr_n]$ and perform
the linear transformation
\begin{equation}\label{zzz1}
H(\zz)=F(R\, \zz).
\end{equation}
It is clear that
\begin{equation}\label{zzz2}
\hat{\zz}_{\ex}=R^{-1}\hat{\xx}_{\ex}
\end{equation}
is an exact root of $H(\zz)$  and
\begin{equation}\label{zzz22}
\quad\hat{\zz}=R^{-1} \, ({\hat{\xx}+ \hat \yy })
\end{equation}
is an approximate root of $H(\zz)$. Moreover,
we have
\begin{equation}\label{zzzold4}
\|\hat{\zz}-\hat{\zz}_{\ex}\|=
\|R^{-1}(\hat{\xx}+\hat{\yy}-\hat{\xx}_{\ex})\|=\|\hat{\xx}+\hat{\yy}-\hat{\xx}_{\ex}\|=\Theta(\varepsilon),
\end{equation}
\begin{equation}\label{zzz4h}
\|H(\hat \zz)\|=\|F(\hat \xx+\hat \yy)\|=\bigO(\varepsilon^2),
\end{equation}
and
\begin{equation}\label{zzz5}
\left\|\frac{\partial H(\hat \zz)}{\partial z_1}\right\| =\left\|
F'(\hat{\xx}+\hat \yy ) \rr_1  \right\|=\sigma'_n =\bigO(\varepsilon).
\end{equation}
Hence, the condition (\ref{rnewton}) is equivalent to
(\ref{zzzold4}). Here and hereafter, we always assume that  $\hat
\zz$ satisfies
\begin{equation}\label{zzz4}
\|\hat{\zz}-\hat{\zz}_{\ex}\|=\Theta(\varepsilon).
\end{equation}

\begin{theorem}\label{lineartrans}
The root $\hat{\zz}_{\ex}$ defined in (\ref{zzz2}) is an isolated
singular solution of $H$ with the  multiplicity $\mu$ and the corank
of $H'(\hat{\zz}_{\ex})$ is one.
\end{theorem}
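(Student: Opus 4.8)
The plan is to observe that $\zz\mapsto R\,\zz$ is an invertible (indeed unitary) linear change of coordinates, and that each of the three assertions---being an isolated solution, having multiplicity $\mu$, and having Jacobian of corank one---is invariant under such a change, so the theorem reduces to the corresponding (known) properties of $\hat{\xx}_{\ex}$ as a root of $F$ together with Assumption~\ref{assump} and Lemma~\ref{lemma:corankone}.

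First I would check that $\hat{\zz}_{\ex}$ is an isolated solution of $H$. By (\ref{zzz1}) and (\ref{zzz2}), $H(\hat{\zz}_{\ex})=F(R\,\hat{\zz}_{\ex})=F(\hat{\xx}_{\ex})=0$. Since $R$ is invertible, $\zz\mapsto R\,\zz$ is a bijection of $\CC^n$ that carries a punctured neighborhood of $\hat{\xx}_{\ex}$ containing no other root of $F$ onto a punctured neighborhood of $\hat{\zz}_{\ex}$ containing no other root of $H$; hence $\hat{\zz}_{\ex}$ is isolated. For the corank, the chain rule gives $H'(\zz)=F'(R\,\zz)\,R$, so $H'(\hat{\zz}_{\ex})=F'(\hat{\xx}_{\ex})\,R$; multiplication on the right by the invertible matrix $R$ preserves the rank, hence $H'(\hat{\zz}_{\ex})$ has corank one by Assumption~\ref{assump}. (In particular $H'(\hat{\zz}_{\ex})$ is singular, so $\hat{\zz}_{\ex}$ is a singular solution.)

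For the multiplicity I would pass to the algebraic setting. The substitution $\varphi\colon f(\xx)\mapsto f(R\,\zz)$ is a $\CC$-algebra isomorphism $\CC[\xx]\to\CC[\zz]$, with inverse given by substitution with $R^{-1}$. It maps $I=(f_1,\ldots,f_n)$ onto the ideal $J$ generated by the entries of $H$, and---using $\hat{\xx}_{\ex}=R\,\hat{\zz}_{\ex}$, which shows $\varphi(x_i-\hat{x}_{i,\ex})=\sum_j R_{ij}(z_j-\hat{z}_{j,\ex})$---it maps the maximal ideal $P=(x_1-\hat{x}_{1,\ex},\ldots,x_n-\hat{x}_{n,\ex})$ onto $P'=(z_1-\hat{z}_{1,\ex},\ldots,z_n-\hat{z}_{n,\ex})$. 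Since $\varphi(P)=P'$, the map $\varphi$ extends to an isomorphism of localizations $\CC[\xx]_P\cong\CC[\zz]_{P'}$ sending $I$ to $J$, hence it carries the isolated $P$-primary component $Q$ of $I$ onto the isolated $P'$-primary component $Q'$ of $J$ and induces a $\CC$-vector-space isomorphism $\CC[\xx]/Q\cong\CC[\zz]/Q'$. Therefore the multiplicity of $\hat{\zz}_{\ex}$ equals $\dim_{\CC}(\CC[\zz]/Q')=\dim_{\CC}(\CC[\xx]/Q)=\mu$. Equivalently, one may argue that $\varphi$ induces (via the transpose action of $R$ on differential operators) a linear isomorphism between the Max Noether spaces $\triangle_{\hat{\xx}_{\ex}}(I)$ and $\triangle_{\hat{\zz}_{\ex}}(J)$, which again gives equality of dimensions.

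The only point that needs a little care is the claim that $\varphi$ respects the \emph{isolated} primary component at the marked point; this is immediate from the uniqueness of the isolated primary component and the fact that an algebra isomorphism sending $I$ to $J$ and $P$ to $P'$ intertwines the localizations $\CC[\xx]_P$ and $\CC[\zz]_{P'}$. Everything else is a direct computation with the chain rule and with invertibility of $R$, so I do not expect a substantive obstacle.
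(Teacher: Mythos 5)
Your proof is correct, and for the multiplicity part it takes a genuinely different route from the paper. The isolated-ness and corank claims are handled essentially the same way (chain rule and invertibility of $R$). For the multiplicity, you argue purely algebraically: the substitution $f(\xx)\mapsto f(R\zz)$ is a $\CC$-algebra automorphism carrying $I$ to $J$, $P$ to $P'$, and hence the isolated $P$-primary component $Q$ to the isolated $P'$-primary component $Q'$, so $\dim_\CC\CC[\xx]/Q=\dim_\CC\CC[\zz]/Q'$. The paper instead works dually: it introduces an explicit operator $\Gamma_R$ on $\Span_\CC(\mathfrak D)$ (the ``pushforward'' of differential operators under the linear change of coordinates), shows by the chain rule that $\Gamma_R$ maps a closed basis of the Max Noether space of $H$ at $\hat\zz_{\ex}$ to a closed, linearly independent set in the Max Noether space of $F$ at $\hat\xx_{\ex}$, concludes $\mu'\le\mu$, and gets the reverse inequality by applying the same argument to $R^{-1}$. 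Your approach is cleaner and more general as a standalone proof (it does not even use the breadth-one hypothesis or the closedness machinery). The paper's choice is motivated by what comes next: the operator $\Gamma_R$ and the correspondence of closed bases under it are used again explicitly in Remark~\ref{remarklt} and in the inductive step of Theorem~\ref{thm:nextmulti}, where one needs to transport \emph{approximate} Max Noether conditions between $F$ and $H$ and compare them term by term --- something the abstract localization argument does not directly supply. You do gesture at this duality as an alternative in your last sentence, which shows you see the connection.
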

\begin{proof}
Since $H'(\hat{\zz}_{\ex})=F'(\hat{\xx}_{\ex})R$ and $R$ is a
unitary matrix, we derive that the corank of $H'(\hat{\zz}_{\ex})$
is one. Let $\mu'$ be the multiplicity of $\hat{\zz}_{\ex}$, and
$\{L_0,L_1,\ldots,L_{\mu'-1}\}$ be a closed basis of the Max Noether
space of $H$ at $\hat{\zz}_{\ex}$. The operator $\Gamma_R:
\Span_\mathbb{C}(\mathfrak{D})\rightarrow\Span_\mathbb{C}(\mathfrak{D})$
is defined by:
\begin{eqnarray*}
\Gamma_R(D(\alpha)):&&=\Gamma_R\left(\frac{1} {\alpha_1!\cdots
\alpha_n!}\frac{\partial^{\alpha_1+\cdots+\alpha_n}}{\partial
z_1^{\alpha_1}\cdots
\partial z_n^{\alpha_n}}\right)\\
&&=\frac{1} {\alpha_1!\cdots
\alpha_n!}\frac{\partial^{\alpha_1+\cdots+\alpha_n}}{\partial
{(\rr_{1}^{\tr}\cdot \xx)}^{\alpha_1}\cdots
\partial {(\rr_{n}^{\tr}\cdot \xx)}^{\alpha_n}}\\
&&=\frac{1} {\alpha_1!\cdots
\alpha_n!}\sum_{|\beta|=|\alpha|}c_{\beta}\cdot
\frac{\partial^{\beta_1+\cdots+\beta_n}}{\partial
x_1^{\beta_1}\cdots
\partial x_n^{\beta_n}},\\
&&=\frac{1} {\alpha_1!\cdots
\alpha_n!}\sum_{|\beta|=|\alpha|}c_{\beta}\cdot \beta_1!\cdots
\beta_n!\cdot D(\beta),
\end{eqnarray*}
where $c_{\beta}$ is the coefficient of $\frac{\partial^{\beta_1+\cdots+\beta_n}}{\partial
x_1^{\beta_1}\cdots
\partial x_n^{\beta_n}}$ in the expansion of $\frac{\partial^{\alpha_1+\cdots+\alpha_n}}{\partial
{(\rr_{1}^{\tr}\cdot \xx)}^{\alpha_1}\cdots
\partial {(\rr_{n}^{\tr}\cdot \xx)}^{\alpha_n}}$.
Since $H(\zz)=F(R\,\zz)$ and $\xx=R\,\zz$, according to multivariate chain rules, we have
\[\Gamma_R(L_k)(F)_{\xx=\hat{\xx}_{\ex}}=L_k(H)_{\zz=\hat{\zz}_{\ex}}=0,\]
and for $1\leq j\leq n$,
\begin{align*}
\Phi_j(\Gamma_R(L_k))=&\Gamma_R\left(\sum_{i=1}^{n}r_{i,j}\Phi_i(L_k)\right)\\
=&\Gamma_R\left(\sum_{i=1}^{k-2}(a_{k-i,2}r_{2,j}+\cdots+a_{k-i,n}r_{n,j})L_i\,+r_{1,j}L_{k-1}\right)\\
=&\sum_{i=1}^{k-1}(a_{k-i,2}r_{2,j}+\cdots+a_{k-i,n}r_{n,j})\Gamma_R(L_i)\,+r_{1,j}\Gamma_R(L_{k-1}),
\end{align*}
where $0\leq k\leq \mu'-1$. Hence,
$\{\Gamma_R(L_0),\Gamma_R(L_1),\ldots,\Gamma_R(L_{\mu'-1})\}$ is a
closed basis of $\triangle_{\hat{\xx}_{\ex}}^{(\mu'-1)}(I)$ and
$\mu'\leq\mu$. On the other hand, since $F(\xx)=H(R^{-1}\,\xx)$, we
derive that $\mu\leq\mu'$. Hence, $\mu'=\mu$.
\end{proof}

\begin{remark}\label{remarklt}
Since $H'(\hat{\zz})=F'(\hat{\xx}+\hat{\yy})R$ and $R$ is a unitary
matrix, we derive that the singular values of $H'(\hat{\zz})$ are
the same as those of $F'(\hat{\xx}+\hat{\yy})$ and the corank of
$H'(\hat{\zz})$ is one approximately. Suppose
$\{L_0,L_1,\ldots,L_{\mu-1}\}$ is a closed basis of the
approximate Max Noether space of $H$ at $\hat{\zz}$, where $L_0=D(0,\ldots,0)$
and $L_1=D(1,0,\ldots,0)$. From the proof of Theorem
\ref{lineartrans} and (\ref{aL_k}), we have
\[\Gamma_R(L_k)(F)_{\xx=\hat{\xx}+\hat{\yy}}=L_k(H)_{\zz=\hat{\zz}}=\bigO(\varepsilon),\]
and
\[\Phi_j(\Gamma_R(L_k))\in\Span\{\Gamma_R(L_1),\ldots,\Gamma_R(L_{k-1})\},\]
where $0\leq k\leq \mu-1$ and $1\leq j\leq n$. Hence,
$\{\Gamma_R(L_0),\Gamma_R(L_1),\ldots,\Gamma_R(L_{\mu-1})\}$ is a
closed basis of $\triangle_{\hat{\xx}+\hat{\yy}}(I)$.
\end{remark}

\begin{remark}\label{regular}
It should be  noticed that  Theorem \ref{lineartrans}  holds as long
as $R$ is a regular matrix. However, if we choose a unitary matrix
$R$, then it is much easier to compute the inverse of $R$ since
$R^{-1}=R^{\tr}$.
\end{remark}

It is interesting to notice that, after running one
regularized Newton iteration, the last $n-1$ elements of the
solution $\hat \zz$ have already been
 refined quadratically.

\begin{theorem}{\label{zi}}
 Suppose  $\hat \zz_{\ex}$ and $\hat \zz$  are defined in (\ref{zzz2}) and (\ref{zzz22}) respectively.    Under Assumption~\ref{assump}, we have
\begin{equation}\label{newzzz3}
|\hat{z}_{1,\ex}-\hat{z}_1| =\Theta(\varepsilon),
\end{equation}
and
\begin{equation}\label{zzz3}
|\hat{z}_{i,\ex}-\hat{z}_i| =\bigO(\varepsilon^2), ~{\rm
for}~i=2,\ldots,n.
\end{equation}

\end{theorem}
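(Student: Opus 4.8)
The plan is to read off both estimates from a first-order Taylor expansion of $H$ at $\hat{\zz}$, combined with a conditioning analysis of the Jacobian $H'(\hat{\zz})$. Write the components of $\hat{\zz}_{\ex}-\hat{\zz}$ as $\triangle z_1,\ldots,\triangle z_n$. Expanding $H$ at $\hat{\zz}$ and using $H(\hat{\zz}_{\ex})=0$ gives
\[
0 = H(\hat{\zz}) + H'(\hat{\zz})\,(\hat{\zz}_{\ex}-\hat{\zz}) + \bigO\big(\|\hat{\zz}_{\ex}-\hat{\zz}\|^2\big),
\]
so by (\ref{zzz4h}) and (\ref{zzz4}) we obtain $\|H'(\hat{\zz})(\hat{\zz}_{\ex}-\hat{\zz})\| = \bigO(\varepsilon^2)$.

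Next I would separate off the first column of $H'(\hat{\zz})$. By (\ref{zzz5}) and (\ref{definer1}) the column $\partial H(\hat{\zz})/\partial z_1 = F'(\hat{\xx}+\hat{\yy})\rr_1$ has norm $\sigma'_n = \bigO(\varepsilon)$, while $|\triangle z_1|\le\|\hat{\zz}_{\ex}-\hat{\zz}\| = \Theta(\varepsilon)$, so the term $(\partial H(\hat{\zz})/\partial z_1)\,\triangle z_1$ is $\bigO(\varepsilon^2)$, and therefore
\[
\Big\|\sum_{j=2}^{n}\frac{\partial H(\hat{\zz})}{\partial z_j}\,\triangle z_j\Big\| = \bigO(\varepsilon^2).
\]
Let $B$ be the $n\times(n-1)$ matrix whose columns are $\partial H(\hat{\zz})/\partial z_j$, $j=2,\ldots,n$, so that $\|B\,[\triangle z_2,\ldots,\triangle z_n]^T\| = \bigO(\varepsilon^2)$.

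The key point is that $B$ is well conditioned. Since $B = F'(\hat{\xx}+\hat{\yy})\,[\rr_2,\ldots,\rr_n]$, the columns $\rr_2,\ldots,\rr_n$ are orthonormal and span the orthogonal complement of $\rr_1$, and $\rr_1$ is the right singular vector of $F'(\hat{\xx}+\hat{\yy})$ for its smallest singular value; hence, expanding any unit vector of $\mathrm{span}(\rr_2,\ldots,\rr_n)$ in the right singular basis shows that the smallest singular value of $B$ equals the second-smallest singular value $\sigma'_{n-1}$ of $F'(\hat{\xx}+\hat{\yy})$. Because $\|\hat{\xx}+\hat{\yy}-\hat{\xx}_{\ex}\| = \Theta(\varepsilon)$, the matrix $F'(\hat{\xx}+\hat{\yy})$ is an $\bigO(\varepsilon)$ perturbation of $F'(\hat{\xx}_{\ex})$, whose corank is exactly one by Lemma~\ref{lemma:corankone}, so $\sigma'_{n-1} = \Theta(1)$ and $B$ has full column rank with $\|B^{+}\| = \bigO(1)$. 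Multiplying $B\,[\triangle z_2,\ldots,\triangle z_n]^T$ by $B^{+}$ then gives $\|[\triangle z_2,\ldots,\triangle z_n]^T\| = \bigO(\varepsilon^2)$, which is (\ref{zzz3}). Finally, from $\|\hat{\zz}_{\ex}-\hat{\zz}\|^2 = |\triangle z_1|^2 + \sum_{j=2}^{n}|\triangle z_j|^2$, where the sum is $\bigO(\varepsilon^4)$ and the left-hand side is $\Theta(\varepsilon^2)$ by (\ref{zzz4}), we get $|\triangle z_1|^2 = \Theta(\varepsilon^2) - \bigO(\varepsilon^4) = \Theta(\varepsilon^2)$ for $\varepsilon$ small enough, proving (\ref{newzzz3}).

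I expect the conditioning statement for $B$ to be the main obstacle: it rests on the interlacing-type fact that deleting the singular direction $\rr_1$ associated with the smallest singular value of $F'(\hat{\xx}+\hat{\yy})$ leaves the remaining $n-1$ directions with singular values of size $\Theta(1)$, which in turn uses that the corank of $F'(\hat{\xx}_{\ex})$ is exactly one together with a perturbation bound for singular values under the $\bigO(\varepsilon)$ change from $\hat{\xx}_{\ex}$ to $\hat{\xx}+\hat{\yy}$ (cf.\ Remark~\ref{remarklt}). Everything else is a routine bookkeeping of $\varepsilon$-orders in the Taylor remainder.
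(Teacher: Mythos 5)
Your proof is correct and follows essentially the same route as the paper: Taylor-expand $H$ at $\hat{\zz}$, use $\|H(\hat{\zz})\|=\bigO(\varepsilon^2)$ to get $\|H'(\hat{\zz})(\hat{\zz}_{\ex}-\hat{\zz})\|=\bigO(\varepsilon^2)$, peel off the $\bigO(\varepsilon)$-norm first column, and invert the remaining $n\times(n-1)$ block. The only difference is that you spell out why that block is well conditioned (its smallest singular value equals $\sigma'_{n-1}=\Theta(1)$ because the columns $\rr_2,\ldots,\rr_n$ span the orthogonal complement of the right singular vector $\rr_1$), whereas the paper delegates this to Remark~\ref{remarklt} with the phrase ``full column rank''; your version is the more complete justification of the same step.
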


\begin{proof}
From the Taylor expansion of $H(\zz)$ at $\hat \zz$, we have
\begin{equation*}
H(\hat{\zz}_{\ex})=H(\hat{\zz})+H'(\hat{\zz})(\hat{\zz}_{\ex}-\hat{\zz})
+\bigO(\varepsilon^2).
\end{equation*}
Since  $H(\hat \zz_{\ex})=0$ and $\|H(\hat
\zz)\|=\bigO(\varepsilon^2)$, we have
\[\|H'(\hat{\zz})(\hat{\zz}_{\ex}-\hat{\zz})\|=\bigO(\varepsilon^2).\]
From (\ref{zzz5}) and (\ref{zzz4}), we have
\[\left\| \frac{\partial H(\hat \zz)}{\partial z_1} \,
(\hat{z}_{1,\ex}-\hat{z}_1)\right \|=\bigO(\varepsilon^2),\]
and
\[\left\|\left[\frac{\partial H(\hat \zz)}{\partial z_2},\ldots,\frac{\partial H(\hat \zz)}{\partial z_n}\right]
\cdot[\hat{z}_{2,\ex}-\hat{z}_2,\ldots,\hat{z}_{n,\ex}-\hat{z}_n]^T\right
\|=\bigO(\varepsilon^2).\] According to Remark \ref{remarklt}, the
matrix $\left[\frac{\partial H(\hat \zz)}{\partial z_2},
 \ldots, \frac{\partial H(\hat \zz)}{\partial z_n}\right]$ is of full column rank,
so that
 (\ref{zzz3}) is correct.
 The
 equation (\ref{newzzz3}) follows from
 (\ref{zzz4}) and (\ref{zzz3}).
\end{proof}

If the multiplicity $\mu$ is larger than $2$, the regularity
assumption in~\cite{Griewank85} will not be satisfied. The violation
of the regularity assumption  is caused by the existence of  the
higher order Max Noether condition. It is interesting to notice that
the left singular vector of the Jacobian matrix $H'(\hat \zz)$
corresponding to the smallest singular value can be used to prove
the following theorem.

\begin{theorem}\label{J1}
If the multiplicity  of the singular root is larger than $2$, under
Assumption~\ref{assump}, we have
\begin{equation*}
\|L_1(H)_{\zz=\hat{\zz}}\|= \left\|\frac{\partial H(\hat \zz)}{
\partial z_1}\right\|=\bigO(\varepsilon^2).
\end{equation*}
\end{theorem}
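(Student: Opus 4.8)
First I would reduce the theorem to the single estimate $\sigma'_n=\bigO(\varepsilon^2)$ for the smallest singular value $\sigma'_n$ of $H'(\hat\zz)$: since $L_1=D(1,0,\ldots,0)$ makes $L_1(H)_{\zz=\hat\zz}=\partial H(\hat\zz)/\partial z_1$, and by~(\ref{zzz5}) $\|\partial H(\hat\zz)/\partial z_1\|=\|F'(\hat\xx+\hat\yy)\rr_1\|=\sigma'_n$, this estimate gives the claim at once. Throughout, $H''(\hat\zz_{\ex})(\uu,\vv)$ denotes the vector with $i$-th entry $\uu^{\tr}\big(\nabla^2H_i(\hat\zz_{\ex})\big)\vv$, and $\partial_{\mathbf{m}}$ the derivative along a vector $\mathbf{m}$.

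The first step converts $\mu>2$ into an algebraic identity at the exact root. By Theorem~\ref{lineartrans} the corank of $H'(\hat\zz_{\ex})$ is one; fix a unit null vector $\mathbf{m}$ (so $H'(\hat\zz_{\ex})\mathbf{m}=0$) and a unit left null vector $\uu_{\ex}$ (so $\uu_{\ex}^{\tr}H'(\hat\zz_{\ex})=0$). By Lemma~\ref{lemma:corankone}, $\dim(\triangle_{\hat\zz_{\ex}}^{(2)}(I_H))=3>2=\dim(\triangle_{\hat\zz_{\ex}}^{(1)}(I_H))$, where $I_H=(H_1,\ldots,H_n)$, so the Max Noether space of $H$ at $\hat\zz_{\ex}$ contains an operator $L_2$ of differential order exactly two. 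By the closedness structure in the breadth-one case (this is the content of Theorem~\ref{LiZhi} after a unitary coordinate change sending the first axis to $\mathbf{m}$), $L_2=\tfrac{c}{2}\,\partial_{\mathbf{m}}^2+L'$ with $c\neq0$ and $L'$ of order $\le1$; since $\hat\zz_{\ex}$ is a root the constant term of $L'$ contributes nothing, and an order-one operator applied to $H$ at $\hat\zz_{\ex}$ equals $H'(\hat\zz_{\ex})$ times its coefficient vector, so $L_2(H_i)_{\zz=\hat\zz_{\ex}}=0$ becomes $\tfrac{c}{2}H''(\hat\zz_{\ex})(\mathbf{m},\mathbf{m})=-H'(\hat\zz_{\ex})\mathbf{c}'$. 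Hence $H''(\hat\zz_{\ex})(\mathbf{m},\mathbf{m})=H'(\hat\zz_{\ex})\,\cc\in\mathrm{Range}(H'(\hat\zz_{\ex}))$ for a fixed $\cc$ with $\|\cc\|=\bigO(1)$, equivalently $\uu_{\ex}^{\tr}H''(\hat\zz_{\ex})(\mathbf{m},\mathbf{m})=0$. I expect this to be the delicate step: it is exactly where $\mu>2$ and closedness enter, and the left null vector $\uu_{\ex}$ (which the left singular vector of $H'(\hat\zz)$ approximates to within $\bigO(\varepsilon)$) is what detects this degeneracy.

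Then I would run a Taylor expansion at $\hat\zz_{\ex}$ fed by Theorem~\ref{zi}. Since $\|\partial H(\hat\zz)/\partial z_1\|=\sigma'_n=\bigO(\varepsilon)$ and $\|H'(\hat\zz)-H'(\hat\zz_{\ex})\|=\bigO(\|\hat\zz-\hat\zz_{\ex}\|)=\bigO(\varepsilon)$ by~(\ref{zzz4}), we get $\|\partial H(\hat\zz_{\ex})/\partial z_1\|=\bigO(\varepsilon)$; as the nonzero singular values of $H'(\hat\zz_{\ex})$ are $\Theta(1)$, the first coordinate vector lies within $\bigO(\varepsilon)$ of the null line, so we may pick $\mathbf{m}$ to agree with it up to $\bigO(\varepsilon)$. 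By Theorem~\ref{zi}, $\hat z_{i,\ex}-\hat z_i=\bigO(\varepsilon^2)$ for $i\ge2$ and $t:=\hat z_{1,\ex}-\hat z_1$ has $|t|=\bigO(\varepsilon)$, so $\hat\zz-\hat\zz_{\ex}=-t\,\mathbf{m}+\bigO(\varepsilon^2)$. Expanding $\zz\mapsto H'(\zz)\mathbf{m}$ about $\hat\zz_{\ex}$,
\begin{align*}
H'(\hat\zz)\mathbf{m}&=H'(\hat\zz_{\ex})\mathbf{m}+H''(\hat\zz_{\ex})(\hat\zz-\hat\zz_{\ex},\mathbf{m})+\bigO(\varepsilon^2)\\
&=-t\,H''(\hat\zz_{\ex})(\mathbf{m},\mathbf{m})+\bigO(\varepsilon^2)=-t\,H'(\hat\zz_{\ex})\,\cc+\bigO(\varepsilon^2),
\end{align*}
and replacing $H'(\hat\zz_{\ex})$ by $H'(\hat\zz)+\bigO(\varepsilon)$, an error of size $|t|\cdot\bigO(\varepsilon)=\bigO(\varepsilon^2)$, gives $H'(\hat\zz)(\mathbf{m}+t\,\cc)=\bigO(\varepsilon^2)$. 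Since $\|\mathbf{m}+t\,\cc\|=1+\bigO(\varepsilon)\ge\tfrac12$, the variational characterization of the smallest singular value gives $\sigma'_n\le\|H'(\hat\zz)(\mathbf{m}+t\,\cc)\|/\|\mathbf{m}+t\,\cc\|=\bigO(\varepsilon^2)$, hence $\|L_1(H)_{\zz=\hat\zz}\|=\|\partial H(\hat\zz)/\partial z_1\|=\sigma'_n=\bigO(\varepsilon^2)$. The only thing to watch in this step is that, by Theorem~\ref{zi}, the error $\hat\zz-\hat\zz_{\ex}$ is aligned with $\mathbf{m}$ up to $\bigO(\varepsilon^2)$, which is precisely what makes the quadratic Taylor term collapse onto $H''(\hat\zz_{\ex})(\mathbf{m},\mathbf{m})\in\mathrm{Range}(H'(\hat\zz_{\ex}))$.
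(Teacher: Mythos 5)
Your proof is correct, and it takes a genuinely different route than the paper's. Both proofs hinge on the same consequence of $\mu>2$: the second-order Max Noether condition forces the second derivative of $H$ along the null direction to lie in the range of the Jacobian. But the paper exploits this on the \emph{left}: it works with the left singular vector $\uu_n$ of $H'(\hat\zz)$ and the left null vector $\uu_{\ex}$ of $H'(\hat\zz_{\ex})$, Taylor-expands $H$ (not $H'$) at both $\hat\zz$ and $\hat\zz_{\ex}$, obtains $|\uu_n^\tr H(\hat\zz)|=\bigO(\varepsilon^3)$ and $|\uu_n^\tr H''(\hat\zz)(\hat\zz_{\ex}-\hat\zz)^2|=\bigO(\varepsilon^3)$, deduces $|\uu_n^\tr H'(\hat\zz)(\hat\zz_{\ex}-\hat\zz)| = |\sigma'_n(\hat z_{1,\ex}-\hat z_1)|=\bigO(\varepsilon^3)$, and finally divides by $|\hat z_{1,\ex}-\hat z_1|=\Theta(\varepsilon)$. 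You instead exploit it on the \emph{right}: you use the right null vector $\mathbf{m}$ of $H'(\hat\zz_{\ex})$ and the exact relation $H''(\hat\zz_{\ex})(\mathbf{m},\mathbf{m})=H'(\hat\zz_{\ex})\cc$, Taylor-expand the contracted map $\zz\mapsto H'(\zz)\mathbf{m}$ once (about $\hat\zz_{\ex}$), and thereby exhibit an explicit unit-norm vector $\mathbf{m}+t\cc$ with $\|H'(\hat\zz)(\mathbf{m}+t\cc)\|=\bigO(\varepsilon^2)$, which bounds $\sigma'_n$ directly by the variational characterization. What you buy is a shorter and more transparent argument — a single Taylor expansion, no separate comparison of $\uu_n$ with $\uu_{\ex}$, and the cubic estimates $\bigO(\varepsilon^3)$ never appear — at the price of invoking the exact Max Noether condition at $\hat\zz_{\ex}$ rather than only the computed approximate one at $\hat\zz$ (which is harmless since all the paper needs here is the existence of $L_2$, not its computability). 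One minor point you glossed over, but which is easily repaired: you must justify that $\|\cc\|=\bigO(1)$ uniformly; this holds because, as $\varepsilon\to 0$, $R$ converges to a fixed unitary matrix, so the Max Noether operators of $H$ at $\hat\zz_{\ex}$ converge to fixed limits — the same implicit boundedness the paper relies on when it asserts $\|L_2(H)_{\zz=\hat\zz}\|=\bigO(\varepsilon)$.
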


\begin{proof}
If $\mu >2$, according to Theorem \ref{LiZhi} and (\ref{aL_k}),
 there exists
a second order Max Noether condition such that
\begin{equation}\label{L2}
\|L_2(H)_{\zz=\hat{\zz}}\|= \left\|\left(\frac{1}{2}\frac{\partial^2 }{
\partial z_1^2}+a_{2,2} \frac{\partial }{
\partial z_2}+ \cdots+a_{2,n} \frac{\partial }{
\partial z_n} \right)(H)_{\zz=\hat{\zz}}\right\|=\bigO(\varepsilon).
\end{equation}

Suppose $\uu_n$ is the left singular vector of $H'(\hat{\zz})$
corresponding to the smallest singular value $\sigma'_n$ and $\|\uu_n\|=1$,  then
\begin{equation}\label{uu1}
 \left|\uu_n^{\tr} \,\frac{\partial H(\hat{\zz})}{
\partial z_i}\right|=
\bigO(\varepsilon),  ~ 1 \leq i \leq n.
\end{equation}
From (\ref{L2}) and (\ref{uu1}), we have
\begin{equation}\label{L2uu}
 \left|\uu_n^{\tr} \, \frac{\partial^2 H(\hat{\zz})}{ \partial z_1^2}\right|
= \bigO(\varepsilon).
\end{equation}
 Therefore,  we get
\begin{equation}\label{L2uu1}
 \left|\uu_n^{\tr}  \frac{\partial^2 H(\hat{\zz})}{ \partial z_1^2}
 (\hat{z}_{1,\ex}-\hat{z}_1)^2\right|
= \bigO(\varepsilon^3).
\end{equation}

From the Taylor expansion of $H(\zz)$ at $\hat{\zz}$, we have
\begin{equation}\label{Taylor}
H(\hat{\zz}_{\ex})=H(\hat{\zz})+H'(\hat{\zz})(\hat{\zz}_{\ex}-\hat{\zz})
+H''(\hat{\zz})(\hat{\zz}_{\ex}-\hat{\zz})^2 +\bigO(\varepsilon^3),
\end{equation}
where $(\hat{\zz}_{\ex}-\hat{\zz})^2$ denotes the vector of all
monomials with degree $2$ and  $H''(\hat{\zz})$ consists of  all
second order derivatives of $H$ evaluated at $\hat \zz$.

According to  Theorem \ref{zi},   all elements in
$(\hat{\zz}_{\ex}-\hat{\zz})^2$ are $\bigO(\varepsilon^3)$ except
the first one. Combining with (\ref{L2uu1}),  we have
\begin{equation}\label{uu3}
\left|\uu_n^{\tr}\,
H''(\hat{\zz})(\hat{\zz}_{\ex}-\hat{\zz})^2\right|=\bigO(\varepsilon^3).
\end{equation}
On the other hand, the Taylor expansion of $H(\zz)$ at
$\hat{\zz}_{\ex}$ shows that
\begin{equation*}
H(\hat{\zz})=H(\hat{\zz}_{\ex})+H'(\hat{\zz}_{\ex})(\hat{\zz}-\hat{\zz}_{\ex})
+H''(\hat{\zz}_{\ex})(\hat{\zz}-\hat{\zz}_{\ex})^2+\bigO(\varepsilon^3).
\end{equation*}
Since the corank of $H'(\hat \zz_{\ex})$ is one,  suppose $\uu_{\ex}$ is
the left null  vector of $H'(\hat{\zz}_{\ex})$ and $\|\uu_{\ex}\|=1$, then
\[ \uu_{\ex}^{\tr} H'(\hat \zz_{\ex})=0.\]
Notice that
\[\|\uu_{\ex}^{\tr} H'(\hat \zz)\|=\|\uu_{\ex}^{\tr}[H'(\hat \zz)-H'(\hat \zz_{\ex})]\|\leq \|H'(\hat \zz)-H'(\hat \zz_{\ex})\|=\bigO(\varepsilon),\]
and $H'(\hat{\zz})$ has corank one approximately, so that $\|\uu_n-\uu_{\ex}\|=\bigO(\varepsilon)$.
Moreover, using the same analysis above, we obtain that
\begin{equation*}
|\uu_{\ex}^{\tr}
H''(\hat{\zz}_{\ex})(\hat{\zz}-\hat{\zz}_{\ex})^2|=\bigO(\varepsilon^3).
\end{equation*}
Hence, we have  $ |\uu_{\ex}^{\tr}  H(\hat \zz)|=
\bigO(\varepsilon^3).$
Noticing $\|H(\hat \zz)\|=
\bigO(\varepsilon^2)$, we get
\begin{equation}\label{uu4}
 |\uu_n^{\tr}  H(\hat \zz)|\leq |(\uu_n-\uu_{\ex})^{\tr}  H(\hat \zz)|+|\uu_{\ex}^{\tr}  H(\hat \zz)|=
\bigO(\varepsilon^3).
\end{equation}

 Combining (\ref{Taylor}), (\ref{uu3}) and (\ref{uu4}),  we have
\begin{equation}\label{uu5}
|\uu_n^{\tr}  H'(\hat{\zz}) (\hat{\zz}_{\ex}-\hat{\zz})|
=\bigO(\varepsilon^3), \end{equation} which is
equivalent to
\begin{equation}\label{uu6}
|\sigma'_n \, \vv_n^{\tr}
(\hat{\zz}_{\ex}-\hat{\zz})|=\bigO(\varepsilon^3),
\end{equation} where $\vv_n=[1,0,\ldots,0]^T$ is the right singular vector of
$H'(\hat{\zz})$ corresponding to $\sigma'_n$. Hence,
$|\sigma'_n(\hat{z}_{1,\ex}-\hat{z}_1)|=\bigO(\varepsilon^3)$. Based
on (\ref{newzzz3}), we have
\begin{equation}\label{hd}
\sigma'_n=\bigO(\varepsilon^2).
\end{equation}
Moreover, from (\ref{zzz5}), we have
\begin{equation}\label{H1x2}
\|L_1(H)_{\zz=\hat{\zz}}\|= \left\|\frac{\partial H(\hat \zz)}{
\partial z_1}\right\| = \bigO(\varepsilon^2).
\end{equation}
\end{proof}

It is amazing  to notice that not only the first order Max Noether
condition computed according to Theorem \ref{LiZhi} satisfies
(\ref{H1x2}), but also all other Max Noether conditions up to the
order $\mu-2 \geq 0$ satisfy similar conditions:
\begin{equation}\label{LiH}
\|L_i(H)_{\zz=\hat{\zz}}\|=\bigO(\varepsilon^2), ~{\rm for }~
i=0,\ldots,\mu-2.
\end{equation}

\subsection{An Augmented Polynomial  System}
\label{sec:32}

To prove (\ref{LiH})  inductively, we need to introduce an augmented
polynomial system and prove the following theorem.

\begin{theorem}\label{dflNo}
Let us assume that $H(\zz)$ is a polynomial system which has
$\hat{\zz}_{\ex}$ as an isolated exact singular solution with the
 multiplicity $\mu$, the corank of $H'(\hat{\zz}_{\ex})$ is one. Let
$I$ be the ideal generated by polynomials in $H$ and $\{L_0, L_1,
\ldots, L_{\mu-1}\}$ be a closed basis of
$\triangle_{\hat{\zz}_{\ex}}(I)$, where $L_0=D(0,\ldots,0),
L_1=D(1,0, \ldots, 0)$
 and  $ L_k=
P_k+ a_{k,2} D(0,1,\ldots,0)+\cdots+a_{k,n}D(0,\ldots,1)$
constructed according to  Theorem \ref{LiZhi}.  The augmented
polynomial system
\begin{equation}\label{dflH}
G(\mathbf{z}, \lambda):=\left\{
\begin{array}{l}
H(\zz),\\
H'(\zz)\cdot \lambda,\\
\lambda_1-1,
\end{array}
\right.
\end{equation}
where $\lambda=[\lambda_1, \ldots, \lambda_n]^T$  has
 an isolated singular solution $(\hat{\zz}_{\ex},\hat{\lambda}_{\ex})$
with the multiplicity $\mu-1$, where
$\hat{\lambda}_{\ex}=[1,0,\ldots,0]^T$. If $\mu \geq 3$ then the
Jacobian matrix $G'(\hat{\zz}_{\ex},\hat{\lambda}_{\ex})$ has corank
one and
\begin{equation} \label{goal1}
\tilde{L}_1 =\frac{\partial}{\partial z_1}
+2a_{2,2}\frac{\partial}{\partial
\lambda_2}+\cdots+2a_{2,n}\frac{\partial}{\partial \lambda_n}
\end{equation}
satisfies $ \tilde{L}_1(G)_{(\zz,
\lambda)=(\hat{\zz}_{\ex},\hat{\lambda}_{\ex})}=0.$ Moreover,
starting from $\tilde L_0=D(0,\ldots,0)$ and  $\tilde L_1$, for
$2\leq k \leq \mu-2$, the $k$-th order Max Noether condition  of $G$
at $(\hat{\zz}_{\ex}, \hat{\lambda}_{\ex})$ retaining the closedness
 has the following form:
\begin{equation} \label{TLk}
\tilde{L}_k = \tilde P_k+a_{k,2} \frac{\partial}{\partial z_2}
+\cdots+a_{k,n}\frac{\partial}{\partial z_n}+
(k+1)a_{k+1,2}\frac{\partial}{\partial
\lambda_2}+\cdots+(k+1)a_{k+1,n}\frac{\partial}{\partial \lambda_n}
\end{equation}
  where
\begin{equation}\label{TPk}
\begin{array}{l}
\tilde{P}_k
=P_k+\Psi_{n+2}(Q_{k,n+2})+\Psi_{n+3}(Q_{k,n+3})_{\alpha_{n+2}=0}
+\cdots+\Psi_{2n}(Q_{k,2n})_{\alpha_{n+2}=\cdots=\alpha_{2n-1}=0}  
\end{array}
\end{equation}
and
\begin{equation}\label{PTPk}
\begin{array}{l}
Q_{k,n+j}=\Phi_{n+j}(\tilde{P}_k)= 2
a_{2,j}\tilde{L}_{k-1}+\cdots+ k a_{k,j}\tilde{L}_1,~2\leq j\leq n.
\end{array}
\end{equation}

\end{theorem}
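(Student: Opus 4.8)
The plan is to verify the claimed structure of the Max Noether space of the augmented system $G$ by relating differential operators on $(\zz,\lambda)$ to those on $\zz$, exploiting the linearity of $G$ in $\lambda$. First I would establish that $(\hat\zz_{\ex},\hat\lambda_{\ex})$ with $\hat\lambda_{\ex}=[1,0,\ldots,0]^T$ is indeed an isolated zero of $G$: the blocks $H(\zz)$ and $\lambda_1-1$ are immediate, and $H'(\hat\zz_{\ex})\cdot\hat\lambda_{\ex}=H'(\hat\zz_{\ex})\rr_1$ vanishes because $\rr_1$ (the first column of $R$) spans the one-dimensional nullspace of $H'(\hat\zz_{\ex})$, which is exactly the content of $L_1=D(1,0,\ldots,0)\in\triangle_{\hat\zz_{\ex}}^{(1)}(I)$. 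For the corank-one claim at $\mu\ge 3$ I would compute $G'(\hat\zz_{\ex},\hat\lambda_{\ex})$ blockwise: the bottom row $\lambda_1-1$ contributes a pivot in the $\lambda_1$ direction; the $H(\zz)$ block contributes $H'(\hat\zz_{\ex})$, corank one; and the $H'(\zz)\lambda$ block, differentiated in $\lambda$ at $\hat\lambda_{\ex}$, again gives $H'(\hat\zz_{\ex})$ acting on the last $n-1$ coordinates of $\lambda$, while its $\zz$-derivative is $H''(\hat\zz_{\ex})\rr_1$. A short rank count then shows the nullspace is spanned by the single vector encoding $\tilde L_1$.

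Next I would verify $\tilde L_1(G)_{(\hat\zz_{\ex},\hat\lambda_{\ex})}=0$ directly. Applying $\partial/\partial z_1$ to the three blocks of $G$ gives $H'(\hat\zz_{\ex})e_1=0$ (first block), $H''(\hat\zz_{\ex})(\rr_1,\cdot)$ contracted appropriately plus $H'(\hat\zz_{\ex})\partial\lambda/\partial z_1$ for the middle block, and $0$ for the last; applying $2a_{2,j}\,\partial/\partial\lambda_j$ to the middle block $H'(\zz)\lambda$ picks out $2a_{2,j}\,\partial H(\hat\zz_{\ex})/\partial z_j$. The crucial cancellation is that $\tfrac12\partial^2 H/\partial z_1^2 + a_{2,2}\partial H/\partial z_2+\cdots+a_{2,n}\partial H/\partial z_n$ vanishes at $\hat\zz_{\ex}$ — this is precisely $L_2(H)_{\hat\zz_{\ex}}=0$ from Theorem~\ref{LiZhi} applied to $H$ — so after multiplying by $2$ the middle block evaluates to zero. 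Then I would set up the induction for $2\le k\le\mu-2$: assuming $\tilde L_0,\ldots,\tilde L_{k-1}$ of the stated shape are Max Noether conditions of $G$ retaining closedness, I would check that $\tilde P_k$ as defined in (\ref{TPk}) satisfies $\Phi_{n+j}(\tilde P_k)=Q_{k,n+j}$ as in (\ref{PTPk}) and the analogous $\Phi_1,\ldots,\Phi_n$ relations inherited from the $P_k$ on the $\zz$-variables, so that $\Span(\tilde L_0,\ldots,\tilde L_k)$ is closed; and that $\tilde L_k(G)_{(\hat\zz_{\ex},\hat\lambda_{\ex})}=0$. The last verification reduces, block by block, to identities among the $L_i(H)_{\hat\zz_{\ex}}$: the $H(\zz)$ block gives $L_k(H)_{\hat\zz_{\ex}}=0$; the $H'(\zz)\lambda$ block, because $\lambda$ appears linearly, contributes a $\zz$-derivative of $L_{k}$-type terms plus, from the $\lambda$-derivatives with the coefficients $(k+1)a_{k+1,j}$ and $\ell a_{\ell,j}$, exactly the combination that reconstitutes $\Psi_1(L_k)$-type terms, matching $L_{k+1}(H)_{\hat\zz_{\ex}}=0$ up to the normalization factor; and the $\lambda_1-1$ block is annihilated since no operator in $\tilde L_k$ differentiates $\lambda_1$.

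The multiplicity statement $\mu-1$ would follow by a dimension count: I would show $\triangle_{(\hat\zz_{\ex},\hat\lambda_{\ex})}(J)=\Span(\tilde L_0,\ldots,\tilde L_{\mu-2})$ has dimension $\mu-1$, using Lemma~\ref{lemma:corankone} (corank one forces the dimension to grow by exactly one per order up to the index, then stabilize) together with the fact that the construction above produces one new operator $\tilde L_k$ of differential order $k$ for each $k=0,\ldots,\mu-2$, and that there can be no operator of order $\mu-1$ — otherwise pulling back along the $\lambda$-linearity would produce a Max Noether condition of order $\mu$ for $H$ at $\hat\zz_{\ex}$, contradicting that its multiplicity is $\mu$ with index $\mu$. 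The main obstacle I anticipate is bookkeeping the combinatorial coefficients: tracking how the factors $2,3,\ldots,k,(k+1)$ arise when $\partial/\partial\lambda_j$ hits $H'(\zz)\lambda$ versus how the binomial factors $\beta_1!\cdots\beta_n!$ from anti-differentiation $\Psi_1$ interact, and confirming that the coefficient of $\partial/\partial\lambda_j$ in $\tilde L_k$ is $(k+1)a_{k+1,j}$ rather than some other multiple. This requires carefully matching the $\Psi$-based construction for $G$ in (\ref{TPk})–(\ref{PTPk}) against the $\Psi$-based construction for $H$ in (\ref{fixedkthcondition})–(\ref{phikth}), degree by degree, which is routine but error-prone.
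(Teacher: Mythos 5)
Your proposal follows the paper's own route: the same blockwise computation of $G'(\hat\zz_{\ex},\hat\lambda_{\ex})$, the same use of the second-order condition $L_2(H)_{\hat\zz_{\ex}}=0$ to produce the null vector and to verify $\tilde L_1(G)=0$, and the same block-by-block reduction of $\tilde L_k(G)=0$ to the three identities $L_k(H)=0$, $\tilde L_k(\lambda_1)=0$, and $\tilde L_k(H'\lambda)=(k+1)L_{k+1}(H)=0$. The one place where you underestimate the work is the step you call ``routine but error-prone bookkeeping'': showing that the $\lambda$-derivatives in $\tilde L_k$ applied to $H'(\zz)\lambda$ reconstitute $(k+1)L_{k+1}$ amounts to the identity $Q_{k+1}=(k+1)P_{k+1}$ with $Q_{k+1}=L_k\,\partial_{z_1}+\sum_{j\ge 2}(2a_{2,j}L_{k-1}+\cdots+k\,a_{k,j}L_1)\,\partial_{z_j}$, and the paper devotes a separate inductive argument (Proposition~\ref{lemmaQk}, four cases on $\alpha_1$ and $|\alpha|$) to it; it is the real content of the theorem rather than a side calculation, so you should not wave it off. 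On the other hand, your argument that there can be no $\tilde L_{\mu-1}$ --- since it would pull back to a Max Noether condition of order $\mu$ for $H$, contradicting index $\mu$ --- is a clean, self-contained way to close the multiplicity count; the paper instead leans on the externally cited fact that deflation strictly reduces multiplicity (see the Remark after Corollary~\ref{coro}), so your version is a small improvement there.
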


\begin{proof}
The Jacobian matrix of
$G(\mathbf{z}, \lambda)$ at $(\hat{\zz}_{\ex},\hat{\lambda}_{\ex})$ is
\begin{align*}
G'(\hat{\zz}_{\ex},\hat{\lambda}_{\ex})=\begin{bmatrix} H'(\hat{\zz}_{\ex}) & ~~0\\
H''(\hat{\zz}_{\ex})\cdot\hat{\lambda}_{\ex} & ~~H'(\hat{\zz}_{\ex})\\
0 & ~~\hat{\lambda}_{\ex}^T
\end{bmatrix},
\end{align*}
 where $H''(\hat{\zz}_{\ex})\cdot\hat{\lambda}_{\ex}=\left[\frac{\partial^2
 H(\hat{\zz}_{\ex})}{\partial {z_1^2}}, \ldots, \frac{\partial^2
 H(\hat{\zz}_{\ex})}{\partial {z_1} \partial {z_n}} \right]$.  Since the corank of $H'(\hat{\zz}_{\ex})$ is one and $L_1=D(1,0, \ldots,
0) \in \triangle_{\hat{\zz}_{\ex}}^{(1)}(I)$, the first column of
$H'(\hat{\zz}_{\ex})$ is a zero vector and the remaining columns of
$H'(\hat{\zz}_{\ex})$ are linearly independent.  Moreover, since
$\hat{\lambda}_{\ex}^T=[1,0,\ldots,0]$, the last  $2 n -1 $ columns
of $G'(\hat{\zz}_{\ex},\hat{\lambda}_{\ex})$ are linearly
independent and its corank is less than one.

If  $\mu\geq 3$,   the second order Max Noether condition  of $H$ at
$\hat{\zz}_{\ex}$ has the form $L_2=D(2,0, \ldots, 0)+a_{2,2}D(0,1,0,
\ldots, 0)+\cdots+a_{2,n}D(0, \ldots, 0,1)$. From $L_2(H)_{\zz=\hat{\zz}_{\ex}}=0$, we have
\begin{equation*}
\frac{1}{2} \frac{\partial^2 H(\hat{\zz}_{\ex})}{\partial z_1^2}+ a_{2,2}
\frac{\partial
 H(\hat{\zz}_{\ex})}{\partial {z_2}}
\cdots+ a_{2,n}\frac{\partial
 H(\hat{\zz}_{\ex})}{\partial {z_n}}=0.
 \end{equation*}
The vector $\vv=[1,0,\ldots,0,2a_{2,2},\ldots,2a_{2,n}]^T$ is a null
vector of $G'(\hat{\zz}_{\ex},\hat{\lambda}_{\ex})$.  Therefore, the Jacobian
matrix $G'(\hat{\zz}_{\ex},\hat{\lambda}_{\ex})$ has corank one and the first
order differential operator $\tilde L_1$ in (\ref{goal1}) satisfies
\begin{equation}\label{Q1}
\tilde{L}_1(H'(\zz)\cdot \lambda)_{(\zz,
\lambda)=(\hat{\zz}_{\ex},\hat{\lambda}_{\ex})}=2 L_2(H)_{\zz=\hat{\zz}_{\ex}}=0.
\end{equation}
Hence, we have
\begin{equation}\label{MaxL1}
\tilde L_1(G)_{(\zz, \lambda)=(\hat{\zz}_{\ex},\hat{\lambda}_{\ex})}=0.
\end{equation}

Using similar arguments in \cite{LiZhi:2009} for proving Theorem
\ref{LiZhi}, we can show that the differential operators
 $\tilde
L_k$ defined by formulas (\ref{TLk}), (\ref{TPk}) and (\ref{PTPk})
retain the closedness. It should also be noticed  that
 $\tilde{L}_k$ always contains the differential monomial $D(k,0,\ldots,0)$ and
 there are no differential monomials $D(i,0,\ldots,
0)$ for $i< k$ contained in $\tilde{L}_k$.  Otherwise, we can reduce
them by $\tilde{L}_i$. Moreover, $\frac{\partial}{\partial
\lambda_1}$ is not contained in any $\tilde{L}_k$, otherwise,
$\tilde{L}_k(\lambda_1)_{(\zz,\lambda)=(\hat{\zz}_{\ex},\hat{\lambda}_{\ex})}\neq
0$. Hence, due to the closedness,  there are no differential
operators
$D(\alpha_1,\ldots,\alpha_n,\alpha_{n+1},\ldots,\alpha_{2n})$ with
$\alpha_{n+1}>0$ contained in any $\tilde{L}_k$.

Now let us show that the constructed differential operators
$\tilde{L}_k$ satisfy
\begin{equation}\label{MaxLk}
\tilde{L}_k(G)_{(\zz,
\lambda)=(\hat{\zz}_{\ex},\hat{\lambda}_{\ex})}=0, ~{\text {for}}~
1 \leq k \leq \mu-2. \end{equation}
From (\ref{MaxL1}), we can see
that (\ref{MaxLk}) is true for $k=1$. Moreover, it is easy to check
that
\begin{equation}\label{cond1}
\tilde{L}_k(H)_{(\zz,
\lambda)=(\hat{\zz}_{\ex},\hat{\lambda}_{\ex})}=\left(P_k+a_{k,2}\frac{\partial}{\partial
z_2} +\cdots+a_{k,n}\frac{\partial}{\partial
z_n}\right)(H)_{\zz=\hat{\zz}_{\ex}}=0,
\end{equation}
and
\begin{equation}\label{cond2}
\tilde{L}_k(\lambda_1)_{(\zz, \lambda)=(\hat{\zz}_{\ex},\hat{\lambda}_{\ex})}=0.
\end{equation}
Based on formulas  (\ref{TLk}), (\ref{TPk}) and (\ref{PTPk}),  we
have
\begin{align*}\label{PQ}
&\tilde{L}_k (H'(\zz)\cdot \lambda)_{(\zz,
\lambda)=(\hat{\zz}_{\ex},\hat{\lambda}_{\ex})}=\\
&\left(L_k\frac{\partial}{\partial z_1}+\sum_{j=2}^n
(2\,a_{2,j}L_{k-1}+\cdots+k\,a_{k,j}L_{1}+(k+1) \,
a_{k+1,j})\frac{\partial}{\partial
z_j}\right)(H)_{\zz=\hat{\zz}_{\ex}}.
\end{align*}
Let us set
\begin{equation}
Q_{k+1}=L_k\frac{\partial}{\partial z_1}+\sum_{j=2}^n
(2\,a_{2,j}L_{k-1}+\cdots+k\,a_{k,j}L_{1})\frac{\partial}{\partial
z_j}.
\end{equation}
We  show (Proposition~\ref{lemmaQk} in Appendix) that
\begin{equation}\label{Qk}
Q_{k+1}=(k+1)P_{k+1}.
\end{equation}
Hence,  we have
\begin{equation}\label{cond3}
\begin{array}{ll}
&\tilde{L}_k (H'(\zz)\cdot \lambda)_{(\zz,
\lambda)=(\hat{\zz}_{\ex},\hat{\lambda}_{\ex})}\\
&=\left((k+1) P_{k+1}+ (k+1) a_{k+1,2} \frac{\partial}{\partial z_2}
+ \cdots+ (k+1) a_{k+1,n} \frac{\partial}{\partial
z_n}\right)(H)_{\zz=\hat{\zz}_{\ex}}\\
&=(k+1) L_{k+1}(H)_{\zz=\hat{\zz}_{\ex}}=0.  
\end{array}
\end{equation}
From (\ref{cond1}), (\ref{cond2}) and (\ref{cond3}),  we derive that
(\ref{MaxLk}) is true for $ 1 \leq k \leq \mu-2$.
\end{proof}

\begin{corollary}\label{coro}
Suppose $F(\xx)$ is a polynomial system which has $\hat{\xx}_{\ex}$
as an isolated exact singular solution with the multiplicity $\mu$
and the corank of $F'(\hat{\xx}_{\ex})$ is one. Let $\rr_1$ be the
null vector of $F'(\hat{\xx}_{\ex})$ and $\|\rr_1\|=1$. For any
random vector $\mathbf{h}\in \mathbb{C}^n$ satisfying
$\mathbf{h}^{\tr}\rr_1\neq 0$, the augmented polynomial system
\begin{equation}\label{DZconj}
J(\mathbf{x}, \nu):=\left\{
\begin{array}{l}
F(\xx),\\
F'(\xx)\cdot \nu,\\
\mathbf{h}^{\tr}\nu-1,
\end{array}
\right.
\end{equation}
has $(\hat{\xx}_{\ex},\frac{\rr_1}{\mathbf{h}^{\tr}\rr_1})$ as an
isolated singular solution with the multiplicity
 $\mu-1$.
\end{corollary}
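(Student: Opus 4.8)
The plan is to reduce Corollary~\ref{coro} to Theorem~\ref{dflNo} by a single invertible linear change of coordinates chosen to match the normalization vector $\hh$; Theorem~\ref{dflNo} carries all the weight, and what remains is change-of-variables bookkeeping. Throughout, write $e_1=[1,0,\ldots,0]^T,e_2,\ldots,e_n$ for the standard basis of $\mathbb{C}^n$.

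First I would construct the matrix $R$. Using $\hh^{\tr}\rr_1\neq 0$, take the first column of $R$ to be $R e_1=\rr_1/(\hh^{\tr}\rr_1)$, which is still a null vector of $F'(\hat{\xx}_{\ex})$, and take the remaining $n-1$ columns to be any basis of the hyperplane $\{v\in\mathbb{C}^n:\hh^{\tr}v=0\}$. Since $R e_1$ does not lie in that hyperplane (indeed $\hh^{\tr}(Re_1)=1$), the matrix $R$ is invertible, and by construction $\hh^{\tr}R=e_1^{\tr}$. Set $H(\zz)=F(R\,\zz)$. By Theorem~\ref{lineartrans} together with Remark~\ref{regular} (which requires only that $R$ be regular), $\hat{\zz}_{\ex}=R^{-1}\hat{\xx}_{\ex}$ is an isolated singular solution of $H$ with multiplicity $\mu$ and $H'(\hat{\zz}_{\ex})=F'(\hat{\xx}_{\ex})R$ has corank one; moreover its first column is $F'(\hat{\xx}_{\ex})(Re_1)=0$, so $D(1,0,\ldots,0)$ is a first-order Max Noether condition of the ideal generated by $H$ at $\hat{\zz}_{\ex}$, and Theorem~\ref{LiZhi} then produces a closed basis $\{L_0,\ldots,L_{\mu-1}\}$ with $L_0=D(0,\ldots,0)$ and $L_1=D(1,0,\ldots,0)$. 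Thus every hypothesis of Theorem~\ref{dflNo} is satisfied by $H$.

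Theorem~\ref{dflNo} then yields that $G(\zz,\lambda)=\bigl(H(\zz),\,H'(\zz)\lambda,\,\lambda_1-1\bigr)$ has $(\hat{\zz}_{\ex},e_1)$ as an isolated singular solution of multiplicity $\mu-1$. Now apply the invertible linear substitution $\xx=R\,\zz$, $\nu=R\,\lambda$ on $\mathbb{C}^{2n}$. From the chain rule $H'(\zz)=F'(R\,\zz)R$ one gets $H'(\zz)\lambda=F'(\xx)\nu$, and from $\hh^{\tr}R=e_1^{\tr}$ one gets $\lambda_1=e_1^{\tr}\lambda=\hh^{\tr}(R\lambda)=\hh^{\tr}\nu$, so $G(\zz,\lambda)$ becomes exactly $\bigl(F(\xx),\,F'(\xx)\nu,\,\hh^{\tr}\nu-1\bigr)=J(\xx,\nu)$. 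Hence $G$ and $J$ are the same polynomial system written in two coordinate systems related by an invertible linear map, which preserves the multiplicity of an isolated solution; and $(\hat{\zz}_{\ex},e_1)$ is sent to $(R\hat{\zz}_{\ex},Re_1)=(\hat{\xx}_{\ex},\rr_1/(\hh^{\tr}\rr_1))$. Therefore $J$ has $(\hat{\xx}_{\ex},\rr_1/(\hh^{\tr}\rr_1))$ as an isolated singular solution of multiplicity $\mu-1$, as claimed.

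There is no genuine obstacle beyond Theorem~\ref{dflNo}. The hypothesis $\hh^{\tr}\rr_1\neq 0$ is used in exactly one place: it is precisely what allows $Re_1$ to be a null vector of $F'(\hat{\xx}_{\ex})$ and $\hh^{\tr}R=e_1^{\tr}$ to hold simultaneously while keeping $R$ invertible. Everything else is routine verification — that $F\circ R$ inherits multiplicity $\mu$ and corank one, that $D(1,0,\ldots,0)$ is the first-order Max Noether condition so Theorem~\ref{LiZhi} applies, and that the substitution $\xx=R\zz$, $\nu=R\lambda$ carries $G$ onto $J$ on the nose. Note that $R$ need not be unitary here: Theorem~\ref{dflNo} is stated intrinsically in terms of $H$, so only regularity of $R$ matters, the unitarity used elsewhere in the paper being purely for numerical convenience.
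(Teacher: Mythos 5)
Your proof is correct and takes essentially the same route as the paper: both reduce Corollary~\ref{coro} to Theorem~\ref{dflNo} by constructing a regular matrix $R$ with $Re_1=\rr_1/(\hh^{\tr}\rr_1)$ and $\hh^{\tr}R=e_1^{\tr}$, then transferring the conclusion via the substitution $\xx=R\zz$, $\nu=R\lambda$ using Theorem~\ref{lineartrans} and Remark~\ref{regular}. The only immaterial difference is that the paper writes down one explicit such $R$, namely $\left[\tfrac{\rr_1}{\hh^{\tr}\rr_1},\,\rr_2-\tfrac{\hh^{\tr}\rr_2}{\hh^{\tr}\rr_1}\rr_1,\ldots,\rr_n-\tfrac{\hh^{\tr}\rr_n}{\hh^{\tr}\rr_1}\rr_1\right]$, whereas you allow the trailing columns to be any basis of the hyperplane $\{v:\hh^{\tr}v=0\}$.
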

\begin{proof}
Let $\{\rr_1,\ldots,\rr_n\}$ be a normal orthogonal basis of
$\mathbb{C}^n$, then
$\mathbf{h}=(\mathbf{h}^{\tr}\rr_1)\rr_1+\cdots+(\mathbf{h}^{\tr}\rr_n)\rr_n$.
If $\mathbf{h}^{\tr}\rr_1\neq 0$,   performing the linear
transformation
\[\xx=R\,\zz,\quad\nu=R\,\lambda,\]
where
$R=\left[\frac{\rr_1}{\mathbf{h}^{\tr}\rr_1},\rr_2-\frac{\mathbf{h}^{\tr}\rr_2}{\mathbf{h}^{\tr}\rr_1}\rr_1,\ldots,\rr_n-
\frac{\mathbf{h}^{\tr}\rr_n}{\mathbf{h}^{\tr}\rr_1}\rr_1 \right]$ is
a regular matrix, we
obtain the augmented polynomial system 
\begin{equation*}
G(\mathbf{z}, \lambda):=\left\{
\begin{array}{l}
H(\zz),\\
H'(\zz)\cdot \lambda,\\
\lambda_1-1,
\end{array}
\right.
\end{equation*}
where
\[H(\zz)=F(R\,\zz),\quad H'(\zz)\cdot \lambda=F'(\xx)\cdot R \cdot R^{-1} \nu,\quad \lambda_1-1=\mathbf{h}^{\tr}\nu-1.\]
According to Theorem~\ref{dflNo}, we know that
$(\hat{\zz}_{\ex},\hat{\lambda}_{\ex})$ is an isolated singular
solution of $G$ with  the multiplicity $\mu-1$,
 where $\hat{\zz}_{\ex}=R^{-1}\hat{\xx}_{\ex}$ and $\hat{\lambda}_{\ex}=[1,0,\ldots,0]^T$.
Hence, by Theorem \ref{lineartrans} and Remark \ref{regular},
$\left(\hat{\xx}_{\ex},\frac{\rr_1}{\mathbf{h}^{\tr}\rr_1}\right)$
is an isolated singular solution of $J(\xx,\nu)$ with the
multiplicity $\mu-1$.
\end{proof}

\begin{remark}
It is well known that the augmented polynomial system $J(\xx,\nu)$
defined in (\ref{DZconj}) has an isolated singular solution
$\left(\hat{\xx}_{\ex},\frac{\rr_1}{\mathbf{h}^{\tr}\rr_1}\right)$
with the multiplicity less than $\mu$, see  \cite{LVZ:2006,
DaytonZeng:2005}.  Here, we proved the  conjecture in
\cite{DaytonZeng:2005} that
  the multiplicity of the singular solution
 of the augmented polynomial system (\ref{DZconj}) drops by one exactly  in the breadth one
 case.
\end{remark}

\begin{remark}\label{remarkdfl}
For the system $H(\zz)$ and its  approximate singular solution $\hat
\zz$ defined in (\ref{zzz1}) and (\ref{zzz22}), the augmented
polynomial system defined in (\ref{dflH}) has
$(\hat{\zz},\hat{\lambda})$ ($\hat \lambda=[1,0, \ldots,0]^T$) as an
approximate solution. Suppose  $\{L_0, \ldots, L_{\mu-1}\}$ is  a
closed basis of the approximate Max Noether space of the system $H$
at $\hat \zz$ constructed according to  Theorem \ref{LiZhi}, from
$L_0=D(0,\ldots,0)$ and $L_1=D(1,0,\ldots,0)$, then
$\{\tilde{L}_0,\tilde{L}_1,\ldots, \tilde{L}_{\mu-2}\}$ constructed
according to Theorem \ref{dflNo} is a closed basis of
the approximate Max Noether space of the system $G$ at
$(\hat{\zz},\hat{\lambda})$, satisfying
\[
\left\{
\begin{array}{l}
\|\tilde{L}_k(H)_{(\zz, \lambda)=(\hat{\zz},\hat{\lambda})}\|=\|L_k(H)_{\zz=\hat{\zz}}\|=\bigO(\varepsilon),\\
\|\tilde{L}_k(H'\lambda)_{(\zz, \lambda)=(\hat{\zz},\hat{\lambda})}\|=\|(k+1) L_{k+1}(H)_{\zz=\hat{\zz}}\|=\bigO(\varepsilon),\\
\|\tilde{L}_k(\lambda_1)_{(\zz,
\lambda)=(\hat{\zz},\hat{\lambda})}\|=0,
\end{array}
\right.
\]
for  $1\leq k\leq \mu-2$.
\end{remark}

\begin{theorem}\label{thm:nextmulti}
Let $F(\xx)$ be a polynomial system which has $\hat{\xx}_{\ex}$ as
an isolated exact singular solution with the multiplicity $\mu$ and
the breadth one. Suppose $\hat{\xx}$ is an approximate solution of
$F$ which satisfies
\begin{equation}\label{inicond}
\|\hat{\xx}-\hat{\xx}_{\ex}\|=\Theta(\varepsilon) \mbox{ and }
\|F(\hat{\xx})\|=\bigO(\varepsilon^2),
\end{equation}
for a small positive number $\varepsilon$.  Let
$\sigma_1,\ldots,\sigma_n$ be the singular values of $F'(\hat{\xx})$
satisfying $\sigma_i=\Theta(1),~1\leq i\leq n-1$ and
$\sigma_n=\bigO(\varepsilon)$. Suppose $\rr_1$ is the right singular
vector corresponding to $\sigma_n$. We form  a unitary matrix
$R=[\rr_1,\ldots,\rr_n]$ and set $H(\zz)=F(R\,\zz)$. Suppose
$\{L_0,\ldots,L_{\mu-1}\}$ is a closed basis of the approximate Max
Noether space of the system $H$ at $\hat{\zz}=R^{-1}\,\hat{\xx}$
constructed according to  Theorem \ref{LiZhi} from
$L_0=D(0,\ldots,0)$ and $L_1=D(1,0,\ldots,0)$, then
\begin{equation*}
\|L_i(H)_{\zz=\hat{\zz}}\|=\bigO(\varepsilon^2), ~{\rm for }~
i=0,\ldots,\mu-2.
\end{equation*}
\end{theorem}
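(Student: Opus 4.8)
The plan is to prove, by induction on the multiplicity $\mu$, a statement that also permits $F$ to be overdetermined (more polynomials than variables) and the transformation matrix to be merely regular rather than unitary; Remark~\ref{regular} shows Theorem~\ref{lineartrans} survives this relaxation, and none of the ingredients of Theorems~\ref{LiZhi} and \ref{dflNo} use squareness. When $\mu \le 2$ the only relevant index is $i=0$, and $L_0(H)_{\zz=\hat{\zz}}=H(\hat{\zz})=F(R\,\hat{\zz})=F(\hat{\xx})$, which is $\bigO(\varepsilon^2)$ by (\ref{inicond}); this is the base case. So assume $\mu \ge 3$ and that the statement holds for every breadth-one system whose isolated singular solution has multiplicity $\mu-1$.

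First I would dispose of the index $i=1$. By Theorem~\ref{lineartrans}, $\hat{\zz}_{\ex}$ is a breadth-one isolated singular solution of $H$ with multiplicity $\mu$; since $R$ is unitary, (\ref{inicond}) gives $\|\hat{\zz}-\hat{\zz}_{\ex}\|=\Theta(\varepsilon)$ and $\|H(\hat{\zz})\|=\bigO(\varepsilon^2)$, while $\left\|\frac{\partial H(\hat{\zz})}{\partial z_1}\right\|=\|F'(\hat{\xx})\,\rr_1\|=\sigma_n=\bigO(\varepsilon)$. These are exactly the hypotheses used in the proofs of Theorems~\ref{zi} and \ref{J1}, which therefore apply in the present setting and yield $\|L_1(H)_{\zz=\hat{\zz}}\|=\left\|\frac{\partial H(\hat{\zz})}{\partial z_1}\right\|=\bigO(\varepsilon^2)$.

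Next I would bring in the augmented system $G(\zz,\lambda)$ of (\ref{dflH}) with its approximate solution $(\hat{\zz},\hat{\lambda})$, $\hat{\lambda}=[1,0,\ldots,0]^T$, and check that it is an instance of the inductive hypothesis at multiplicity $\mu-1$. By Theorem~\ref{dflNo}, $(\hat{\zz}_{\ex},\hat{\lambda}_{\ex})$ is a breadth-one isolated singular solution of $G$ with multiplicity $\mu-1$ and $\hat{\lambda}_{\ex}=\hat{\lambda}$, so $\|(\hat{\zz},\hat{\lambda})-(\hat{\zz}_{\ex},\hat{\lambda}_{\ex})\|=\|\hat{\zz}-\hat{\zz}_{\ex}\|=\Theta(\varepsilon)$. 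The three blocks of $G(\hat{\zz},\hat{\lambda})$ are $H(\hat{\zz})$, $H'(\hat{\zz})\,\hat{\lambda}=\frac{\partial H(\hat{\zz})}{\partial z_1}$ and $\hat{\lambda}_1-1=0$, which are $\bigO(\varepsilon^2)$, $\bigO(\varepsilon^2)$ (previous paragraph), and $0$ respectively, so $\|G(\hat{\zz},\hat{\lambda})\|=\bigO(\varepsilon^2)$. Because $G'$ is polynomial in its arguments and $G'(\hat{\zz}_{\ex},\hat{\lambda}_{\ex})$ has corank one, a singular-value perturbation estimate makes all but one singular value of $G'(\hat{\zz},\hat{\lambda})$ of size $\Theta(1)$ and the last of size $\bigO(\varepsilon)$, with the corresponding right singular vector within $\bigO(\varepsilon)$ of the normalized exact null vector $\vv$ of $G'(\hat{\zz}_{\ex},\hat{\lambda}_{\ex})$ exhibited in the proof of Theorem~\ref{dflNo}. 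Finally, by Remark~\ref{remarkdfl} the operators $\{\tilde{L}_0,\ldots,\tilde{L}_{\mu-2}\}$ of Theorem~\ref{dflNo} form a closed basis of the approximate Max Noether space of $G$ at $(\hat{\zz},\hat{\lambda})$, and after the regular change of variables carrying $\vv$ to the first coordinate direction this is exactly the basis produced by the construction of Theorem~\ref{LiZhi}.

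With these hypotheses verified, the inductive hypothesis applied to $G$ gives $\|\tilde{L}_k(G)_{(\zz,\lambda)=(\hat{\zz},\hat{\lambda})}\|=\bigO(\varepsilon^2)$ for $k=0,\ldots,\mu-3$; restricting to the $H'\lambda$ sub-block of $G$ yields $\|\tilde{L}_k(H'\lambda)_{(\zz,\lambda)=(\hat{\zz},\hat{\lambda})}\|=\bigO(\varepsilon^2)$ for these $k$. Invoking the identity $\tilde{L}_k(H'\lambda)_{(\zz,\lambda)=(\hat{\zz},\hat{\lambda})}=(k+1)\,L_{k+1}(H)_{\zz=\hat{\zz}}$ recorded in Remark~\ref{remarkdfl} (the analogue of (\ref{cond3}) at the approximate point, and for $k=0$ immediate since $\tilde{L}_0=D(0,\ldots,0)$), this translates to $\|L_j(H)_{\zz=\hat{\zz}}\|=\bigO(\varepsilon^2)$ for $j=1,\ldots,\mu-2$; combined with the $i=0$ case the induction is complete. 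The step I expect to be the main obstacle is the verification in the preceding paragraph: confirming that the overdetermined system $G$, whose deflation-adapted first operator $\tilde{L}_1$ is not literally $D(1,0,\ldots,0)$, genuinely falls under the same theorem — that is, that the $\tilde{L}_k$ coincide with the Theorem~\ref{LiZhi}-style basis after an admissible regular change of variables, and that the singular values and the relevant right singular vector of $G'(\hat{\zz},\hat{\lambda})$ really have the stated orders of perturbation.
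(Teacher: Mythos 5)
Your route is the paper's route: one regularized Newton step (absorbed into the hypotheses \eqref{inicond}), the rotation $R$, Theorem~\ref{J1} to settle $i=1$, the augmented system $G$, induction on $\mu$ by passing to a rotated $J=G(\bar{R}\,\ww)$, and finally the identity $\tilde{L}_k(H'\lambda)=(k+1)L_{k+1}(H)$ to descend. Two specific points need repair.

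First, you assert that by Theorem~\ref{dflNo} the exact singular solution of $G$ is $(\hat{\zz}_{\ex},\hat{\lambda}_{\ex})$ with $\hat{\lambda}_{\ex}=\hat{\lambda}=[1,0,\dots,0]^T$. Theorem~\ref{dflNo} only delivers this when $D(1,0,\dots,0)$ is an \emph{exact} first-order Max Noether condition of $H$ at $\hat{\zz}_{\ex}$, i.e., when the first column of $H'(\hat{\zz}_{\ex})$ vanishes. But $\rr_1$ was chosen as the null direction of $F'(\hat{\xx})$, not of $F'(\hat{\xx}_{\ex})$, so the first column of $H'(\hat{\zz}_{\ex})$ is only $\bigO(\varepsilon)$, not zero. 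The paper therefore invokes Corollary~\ref{coro} with $\mathbf{h}=[1,0,\dots,0]^T$: the exact solution of $G$ is $(\hat{\zz}_{\ex},\vv/v_1)$ where $\vv$ is the true null vector of $H'(\hat{\zz}_{\ex})$, and one checks $v_1=\Theta(1)$, $v_i=\bigO(\varepsilon)$ for $i\ge 2$ via \eqref{hv}--\eqref{v1i}, hence $\|\hat{\lambda}-\hat{\lambda}_{\ex}\|=\bigO(\varepsilon)$ rather than $=0$. The quantity $\|(\hat{\zz},\hat{\lambda})-(\hat{\zz}_{\ex},\hat{\lambda}_{\ex})\|$ is still $\Theta(\varepsilon)$, so the conclusion survives, but the identification $\hat{\lambda}_{\ex}=\hat{\lambda}$ is false and the appeal to Theorem~\ref{dflNo} in this role must be replaced by Corollary~\ref{coro}.

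Second, and more substantively, the step you flag at the end is indeed the load-bearing one and cannot be waved away: the $\tilde{L}_k$ of Theorem~\ref{dflNo}/Remark~\ref{remarkdfl} do \emph{not} coincide, after the change of variables by $\bar{R}$, with the Theorem~\ref{LiZhi} basis $\bar{L}_k$ of $J$; you need (and it suffices) that the change of basis $\Gamma_{\bar{R}}(\bar{L}_i)\leftrightarrow\tilde{L}_i$ be triangular with bounded nonzero diagonal entries, so that the $\bigO(\varepsilon^2)$ bound transfers from one family to the other. The paper proves exactly this in Proposition~\ref{prop3} (Appendix): reducing $\Gamma_{\bar{R}}(\bar{L}_k)$ modulo $\tilde{L}_0,\dots,\tilde{L}_{k-1}$ gives $a^{-k}\tilde{L}_k$ with $a=\sqrt{1+4(a_{2,2}^2+\cdots+a_{2,n}^2)}$, by induction on $k$ using the closedness relation $\Phi_1$. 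Without an argument of that kind, the transfer of the inductive estimate from $J$ to $G$, and hence the entire inductive step, is unsupported. So the proposal identifies the right architecture but leaves the one genuinely technical lemma — the triangular relation between the two bases — open.
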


\begin{remark}
Under Assumption \ref{assump}, according to Theorem
\ref{thm:prenewton}, we can always perform the regularized
Newton iteration to obtain an approximate singular solution
$\hat \xx$ satisfying (\ref{inicond}).  Moreover, it should also be
noticed that all discussions in Section \ref{sec:31} after Theorem
\ref{thm:prenewton}  are valid if we start with an approximate
singular solution satisfying (\ref{inicond}).
\end{remark}
\begin{proof}
According to (\ref{zzz4h}) and Theorem \ref{J1}, we know that
Theorem \ref{thm:nextmulti} is true for $\mu=2$ and $\mu=3$.

Now let us assume that Theorem \ref{thm:nextmulti}  is true for
$\mu=k$ and $k\geq3$.  For $\mu=k+1$,   we form the augmented
polynomial system
 $G(\zz,\lambda)=\{H(\zz), H'(\zz) \cdot \lambda, \lambda_1-1\}$.

  According to
Theorem \ref{lineartrans}, the root $\hat{\zz}_{\ex}$ defined in
(\ref{zzz2}) is an exact  singular solution of $H(\zz)$ with the
multiplicity $\mu$ and the corank of $H'(\hat{\zz}_{\ex})$ is one.
Let $\vv$ be the null vector of $H'(\hat{\zz}_{\ex})$ and
$\|\vv\|=1$. Since
\begin{equation}\label{hv}
\|H'(\hat{\zz})\vv\|=\|[H'(\hat{\zz})-H'(\hat{\zz}_{\ex})]\vv\|=
\bigO(\varepsilon),
\end{equation}
$\left[\frac{\partial H(\hat{\zz})}{\partial
z_2},\ldots,\frac{\partial H(\hat{\zz})}{\partial z_n}\right]$ is of
full column rank,  combining with  (\ref{zzz5}), we derive that
\begin{equation}\label{v1i}
v_1=\Theta(1), ~{\text {and}}~  v_i=\bigO(\varepsilon),
~{\text{for}}~ 2 \leq i \leq n.
\end{equation}
Set $\mathbf{h}=[1,0,\ldots, 0]^T$,  we have $\mathbf{h}^{\tr}\vv
=v_1=\Theta(1) \neq 0$.

According to Corollary \ref{coro},  the augmented polynomial system
$G(\zz, \lambda)$ has
 $\left(\hat{\zz}_{\ex},\frac{\vv}{\mathbf{h}^{\tr}
 \vv}\right)=(\hat{\zz}_{\ex},\hat{\lambda}_{\ex})$,
 where
  \[\hat{\lambda}_{\ex}= \left[ 1, \frac{v_2}{v_1}, \ldots, \frac{v_n}{v_1}
  \right]^T,\]
 as an
isolated singular solution with the multiplicity $\mu-1$, which is
equal to $k$. According to Remark \ref{remarkdfl},
$(\hat{\zz},\hat{\lambda})$ ($\hat \lambda=[1,0, \ldots,0]^T$) is an
approximate solution of $G(\zz, \lambda)$. Moreover, by (\ref{zzz4})
and (\ref{v1i}), we have
\[\|(\hat{\zz},\hat{\lambda})-(\hat{\zz}_{\ex},\hat{\lambda}_{\ex})\|=\sqrt{\|\hat{\zz}-\hat{\zz}_{\ex}\|^2+\|\hat{\lambda}-\hat{\lambda}_{\ex}\|^2}=\Theta(\varepsilon).\]
Furthermore,  from (\ref{zzz4h}) and (\ref{H1x2}), we have
\begin{equation*}
\|G(\hat{\zz},\hat{\lambda})\| =\sqrt{\|H(\hat{\zz})\|^2+
\left\|\frac{\partial H(\hat \zz)}{
\partial z_1}\right\|^2}=\bigO(\varepsilon^2).
\end{equation*}

We have assumed that  Theorem \ref{thm:nextmulti}  is true when the
multiplicity  is equal to  $k$. Therefore,  for the augmented
polynomial system $G(\zz, \lambda)$, we can form a unitary matrix
$\bar{R}$ with
$\rr_1=\frac{1}{a}[1,0,\ldots,0,2a_{2,2},\ldots,2a_{2,n}]^T$ as its
first column, where $a=\sqrt{1+ 4 (a_{2,2}^2+\cdots+a_{2,n}^2)}$,
then generating a new system $J(\ww)=G(\bar{R}\,\ww)$ which has an
approximate singular solution $\hat{\ww}$ with the multiplicity $k$.
By the inductive assumption, we have
\[\|\bar{L}_i(J)_{\ww=\hat{\ww}}\|=\bigO(\varepsilon^2),  ~{\text{for}}~  0\leq
i\leq k-2,\]
 where $\bar{L}_i$ is the $i$-th Max Noether condition
of $J$ at $\hat{\ww}$ constructed by Theorem \ref{LiZhi} from
$\bar{L}_0=D(0,\ldots,0)$ and $\bar{L}_1=D(1,0,\ldots,0)$. According
to Theorem \ref{lineartrans},
\[\bar{L}_i(J)_{\ww=\hat{\ww}}=\Gamma_{\bar{R}}(\bar{L}_i)(G)_{(\zz,\lambda)=(\hat{\zz},\hat{\lambda})}.\]
Since $\{\tilde{L}_0,\tilde{L}_1,\ldots, \tilde{L}_{k-1}\}$ and
$\{\Gamma_{\bar{R}}(\bar{L}_0),\Gamma_{\bar{R}}(\bar{L}_1),\ldots,
\Gamma_{\bar{R}}(\bar{L}_{k-1})\}$ are both closed basis
of the approximate Max Noether space of the system $G$ at
$(\hat{\zz},\hat{\lambda})$, and
\[ \Gamma_{\bar{R}}(\bar{L}_0)=\tilde{L}_0, ~{\text{and}}~
\Gamma_{\bar{R}}(\bar{L}_1)=\frac{1}{a}\tilde{L}_1,\]
 we derive that
$\Gamma_{\bar{R}}(\bar{L}_i)$ is a linear combination of
$\{\tilde{L}_0,\tilde{L}_1,\ldots, \tilde{L}_{i}\}$ (Proposition
\ref{prop3} in Appendix). Hence, we have
$\|\tilde{L}_i(G)_{(\zz,\lambda)=(\hat{\zz},\hat{\lambda})}\|=\bigO(\varepsilon^2)$,
and
\[
\|L_{i+1}(H)_{\zz=\hat{\zz}}\|=\left\|\frac{1}{i+1}\tilde{L}_{i}(H'\lambda)_{(\zz,\lambda)
=(\hat{\zz},\hat{\lambda})}\right\|=\bigO(\varepsilon^2).
\]
Therefore, Theorem \ref{thm:nextmulti}  is true for $\mu=k+1$.
\end{proof}

\subsection{An Algorithm  for Refining Approximate Singular Solutions}
\label{sec:34}

\noindent
\begin{algorithm}\label{MultiStructure}{\sf
MultipleRootRefinerBreadthOne}

\noindent
 \textbf{Input:} An approximate solution $\hat{\xx}$ of a polynomial system
$F$ which is close to an isolated exact singular solution of $F$
with the multiplicity $\mu$ in the breadth one case, and a tolerance
$\tau$.

\noindent
 \textbf{Output:} Refined solution $\hat{\xx}$.

\begin{enumerate}

\item  \label{step:1}{\sf Regularized Newton Iteration:} Solve the regularized least squares problem
\begin{equation*}
({A}^{\tr} {A}+\sigma_n I_n)\hat{\yy} ={A}^{\tr} \bb,
\end{equation*}
 where $\bb=-F(\hat{\xx})$, $A^{\tr}$ is the Hermitian (conjugate) transpose of $A=F'(\hat{\xx})$, $I_n$ is the $n\times n$ identity matrix and $\sigma_n$ is
 the smallest singular value of $A$.

\item \label{step:2}Compute the  null vector $\rr_1$ of  $F'(\hat{\xx}+\hat{\yy})$ with respect to $\tau$,  form a unitary matrix $R$ with $\rr_1$ as its first column
and perform the linear transformation
\[H(\zz):=F(R\,\zz),\]
and set $\hat{\zz}:=R^{-1}(\hat{\xx}+\hat{\yy})$.

\item \label{step:3}
Construct a closed basis of the approximate Max Noether space of
$I=(h_1,\ldots,h_n)$ at $\hat{\zz}$ with respect to $\tau$:
\[\triangle_{\hat{\zz}}^{(\mu-1)}({I}):=\Span (L_0,
 L_1,\ldots,L_{\mu-1})\]
by Algorithm {\sf MultiplicityStructureBreadthOneNumeric} in
\cite{LiZhi:2009}.

\item \label{step4} Solve the linear system
\begin{equation}\label{Pk}
\left[P_{\mu}(H)_{\zz=\hat{\zz}},\frac{\partial H(\hat
\zz)}{\partial z_2}, \ldots, \frac{\partial H(\hat \zz)}{\partial
z_n}\right]\,\vv=-L_{\mu-1}(H)_{\zz=\hat{\zz}},
\end{equation}
 where
$\vv=[v_1,\cdots,v_{n}]^T$ and $P_{\mu}$ is the differential
operator of order $\mu$ computed by formulas in Theorem \ref{LiZhi}. Set
$\delta:=\frac{v_1}{\mu}$.

\item \label{step:5} Return
\begin{equation*}
   \hat{\xx}:= \hat{\xx}+\hat{\yy}+\delta\,\rr_1.
\end{equation*}
\end{enumerate}
\end{algorithm}

\begin{remark}
The size of matrices involved in the algorithm {\sf
MultipleRootRefinerBreadthOne} is bounded by $n\times n$, whereas
the size of matrices used  in the deflation method
 is bounded by $ (\mu \,
n ) \times (\mu \, n)$ \cite{DaytonZeng:2005, LVZ:2006}.
\end{remark}


\begin{remark}
In fact, in order to keep the sparse structure of the original
polynomial system, we should avoid performing the linear
transformation. Moreover, it is  expensive to compute and store all
Max Noether conditions. Since  we only need their evaluations  to
solve
 (\ref{Pk}), it's possible to compute and store only the necessary
 evaluations of these Max Noether conditions.  We will discuss these
 issues in forthcoming papers.
\end{remark}

\subsection{Quadratic Convergence of the Algorithm}
\label{sec:33}

\noindent
\begin{theorem}\label{thmmu23}
Under Assumptions \ref{assump}, the
refined singular solution $\hat \xx$ returned by Algorithm {\sf
MultipleRootRefinerBreadthOne} satisfies
\begin{equation}\label{convm}
\|\hat{\xx}-\hat{\xx}_{\ex}\|=\bigO(\varepsilon^2).
\end{equation}
\end{theorem}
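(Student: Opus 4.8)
The plan is to pass to the $\zz$-coordinates of Step~\ref{step:2} of Algorithm~\ref{MultiStructure} and to show that the scalar $\delta$ produced in Step~\ref{step4} is a quadratically accurate estimate of the first-coordinate error. Since $R$ is unitary and $R^{-1}\rr_1=[1,0,\ldots,0]^T$, we have $\|\hat{\xx}+\hat{\yy}+\delta\rr_1-\hat{\xx}_{\ex}\|=\|\hat{\zz}+\delta[1,0,\ldots,0]^T-\hat{\zz}_{\ex}\|$ with $\hat{\zz}_{\ex},\hat{\zz}$ as in (\ref{zzz2})--(\ref{zzz22}); by Theorem~\ref{zi} the components $2,\ldots,n$ of $\hat{\zz}-\hat{\zz}_{\ex}$ are already $\bigO(\varepsilon^2)$, while $t:=\hat{z}_{1,\ex}-\hat{z}_1=\Theta(\varepsilon)$ by (\ref{newzzz3}). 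So the whole statement reduces to proving $|\delta-t|=\bigO(\varepsilon^2)$.

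Next I would record that the coefficient matrix $M:=\left[P_\mu(H)_{\hat{\zz}},\,\frac{\partial H(\hat{\zz})}{\partial z_2},\ldots,\frac{\partial H(\hat{\zz})}{\partial z_n}\right]$ of the linear system in Step~\ref{step4} is nonsingular with $\|M^{-1}\|=\bigO(1)$, uniformly for $\hat{\zz}$ near $\hat{\zz}_{\ex}$. Indeed, if $P_\mu(H)_{\hat{\zz}_{\ex}}$ were a linear combination of the $\frac{\partial H(\hat{\zz}_{\ex})}{\partial z_j}$, $j\ge2$, then $P_\mu$ plus the corresponding transverse correction would be a Max Noether condition of $H$ at $\hat{\zz}_{\ex}$ of order $\mu$ retaining closedness; by Lemma~\ref{lemma:corankone} no such condition exists (the multiplicity is exactly $\mu$), so $M$ is nonsingular at $\hat{\zz}_{\ex}$, and by continuity together with $\|\hat{\zz}-\hat{\zz}_{\ex}\|=\Theta(\varepsilon)$ it is well conditioned at $\hat{\zz}$. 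Consequently it suffices to exhibit a vector $\vv^{\star}=[\mu t,w_2,\ldots,w_n]^T$ with $w_j=\bigO(\varepsilon)$ and $M\vv^{\star}=-L_{\mu-1}(H)_{\hat{\zz}}+\bigO(\varepsilon^2)$: the true solution then satisfies $\vv=\vv^{\star}+M^{-1}\bigO(\varepsilon^2)=\vv^{\star}+\bigO(\varepsilon^2)$, so $v_1=\mu t+\bigO(\varepsilon^2)$ and $\delta=v_1/\mu=t+\bigO(\varepsilon^2)$.

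The heart of the proof is therefore the estimate
\[
L_{\mu-1}(H)_{\hat{\zz}}+\mu\,t\,P_\mu(H)_{\hat{\zz}}\;=\;-\sum_{j=2}^{n}w_j\,\frac{\partial H(\hat{\zz})}{\partial z_j}+\bigO(\varepsilon^2),\qquad w_j=\bigO(\varepsilon).
\]
To prove it I would Taylor-expand $0=H(\hat{\zz}_{\ex})$ about $\hat{\zz}$ to total order $\mu$ along $\hat{\zz}_{\ex}-\hat{\zz}=(t,d_2,\ldots,d_n)$ with $t=\Theta(\varepsilon)$ and $d_j=\bigO(\varepsilon^2)$: monomials $D(\alpha)(H)_{\hat{\zz}}(\hat{\zz}_{\ex}-\hat{\zz})^\alpha$ with two or more transverse indices contribute $\bigO(\varepsilon^4)$; those with exactly one transverse index sum to $\sum_{j\ge2}d_j\frac{\partial H(\hat{\zz})}{\partial z_j}+\bigO(\varepsilon^3)$, which already has the required shape; so the purely-$z_1$ part $\sum_{k=0}^{\mu}t^kD(k,0,\ldots,0)(H)_{\hat{\zz}}$ must itself have that shape. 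I would then substitute for each $D(k,0,\ldots,0)(H)_{\hat{\zz}}$ using the construction of Theorem~\ref{LiZhi}: $L_k$ (for $k\le\mu-1$), resp.\ $P_\mu$, contains $D(k,0,\ldots,0)$, resp.\ $D(\mu,0,\ldots,0)$, with coefficient $1$ and no lower purely-$z_1$ monomial, while all of their remaining monomials carry a transverse index and are controlled by the closedness relations (\ref{kthcondition})--(\ref{phikth}); together with $\|L_k(H)_{\hat{\zz}}\|=\bigO(\varepsilon^2)$ for $k\le\mu-2$ (Theorem~\ref{thm:nextmulti}), (\ref{zzz5}) and (\ref{zzz4h}), this collapses the purely-$z_1$ sum into $L_{\mu-1}(H)_{\hat{\zz}}+\mu t\,P_\mu(H)_{\hat{\zz}}$ plus terms of the required shape plus $\bigO(\varepsilon^2)$, the integer factor $\mu$ arising from $\partial_{z_1}D(\mu-1,0,\ldots,0)=\mu\,D(\mu,0,\ldots,0)$.

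The main obstacle is precisely this collapsing step: keeping exact track of how the binomial and integer factors produced by differentiation interact with the recursive definitions of $P_k$ and $L_k$, and of which transverse monomials $D(\gamma)(H)_{\hat{\zz}}$ genuinely lie in $\Span\{\frac{\partial H(\hat{\zz})}{\partial z_j}:j\ge2\}$ modulo $\bigO(\varepsilon)$. I expect the cleanest organization is to verify the one-variable case first, where the estimate is just the Taylor expansion of $H^{(\mu-1)}$ about the $\mu$-fold root, and then to mirror the induction on $\mu$ used for Theorem~\ref{thm:nextmulti}: passing from $F$ at multiplicity $\mu$ to the augmented system $G$ of (\ref{dflH}) at multiplicity $\mu-1$ via Corollary~\ref{coro} and Remark~\ref{remarkdfl}, so that the order-$(\mu-1)$ relation for $G$ feeds the order-$\mu$ relation for $F$ through the identity $\tilde{L}_k(H'\lambda)=(k{+}1)L_{k+1}(H)$ recorded in Remark~\ref{remarkdfl}.
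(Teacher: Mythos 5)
Your reduction to $|\delta-t|=\bigO(\varepsilon^2)$ (with $t=\hat z_{1,\ex}-\hat z_1$) via Theorem~\ref{zi} is correct, and your well-conditioning argument for the matrix $M$ in (\ref{Pk}) via Lemma~\ref{lemma:corankone} is sound; the whole problem does indeed reduce to the displayed estimate $L_{\mu-1}(H)_{\hat\zz}+\mu t\,P_\mu(H)_{\hat\zz}\in\Span\{\partial_j H(\hat\zz):j\ge2\}+\bigO(\varepsilon^2)$. But the Taylor-expansion argument you give for that estimate does not work. Expanding $0=H(\hat\zz_{\ex})$ about $\hat\zz$ along $(t,d_2,\ldots,d_n)$ puts a prefactor $t^{\mu-1}$ in front of $D(\mu-1,0,\ldots,0)(H)_{\hat\zz}$ and $t^\mu$ in front of $D(\mu,0,\ldots,0)(H)_{\hat\zz}$; after you re-express each $D(k,0,\ldots,0)(H)_{\hat\zz}$ in terms of $L_k$ resp.\ $P_\mu$, the pieces $L_{\mu-1}(H)_{\hat\zz}$ and $P_\mu(H)_{\hat\zz}$ therefore enter with weights $t^{\mu-1}$ and $t^\mu$, both $\bigO(\varepsilon^2)$, not with weights $1$ and $\mu t$. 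Every term in that expansion is already $\bigO(\varepsilon^2)$ or lies in $\Span\{\partial_j H\}$ up to $\bigO(\varepsilon^2)$, so you only obtain the tautology $\bigO(\varepsilon^2)=\bigO(\varepsilon^2)$ and no constraint relating $L_{\mu-1}(H)_{\hat\zz}$ to $tP_\mu(H)_{\hat\zz}$. Your last paragraph in fact points at the right fix — in one variable the correct identity comes from Taylor-expanding $H^{(\mu-1)}(\hat z_{\ex})=0$, i.e., from applying the order-$(\mu-1)$ operator \emph{before} expanding to first order — but this is a different expansion than the one in your third paragraph, and you yourself label the multivariate version of this collapsing step ``the main obstacle'' without resolving it; it is genuinely not a routine bookkeeping matter, because of the mixed monomials in $P_\mu$ and the interplay with (\ref{kthcondition})--(\ref{phikth}).

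For comparison, the paper avoids Taylor expansions entirely at this point. It introduces the modified operator $L_\mu:=L_{\mu-1}+v_1 P_\mu+v_2\partial_{z_2}+\cdots+v_n\partial_{z_n}$ (so $L_\mu(H)_{\hat\zz}=0$ exactly), shows using closedness and Theorem~\ref{thm:nextmulti} that all $\|L_i((z_k-\hat z_k)H)_{\hat\zz}\|=\bigO(\varepsilon^2)$, and then treats $\{L_0,\ldots,L_{\mu-2},L_\mu\}$ together with the normal set $\{1,\partial_{z_1},\ldots,\partial_{z_1}^{\mu-1}\}$ as an approximate dual basis/normal set pair. This makes the multiplication matrix $\widetilde M_{z_1}$ upper-triangular with a single nonzero diagonal entry, giving $\Tr(\widetilde M_{z_1})=v_1$ by direct inspection, and the trace-averaging estimate $\frac{1}{\mu}\Tr(\widetilde M_{z_1})=-\hat z_{1,\er}+\bigO(\varepsilon^2)$ is imported from \cite[Remark~18]{WuZhi:2009}. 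If you want to pursue your route, the missing step is to establish the analogue of that trace estimate directly — i.e., to show that the operator $L_{\mu-1}+\mu t P_\mu+\sum_{j\ge2}w_j\partial_{z_j}$ annihilates $H$ at $\hat\zz$ up to $\bigO(\varepsilon^2)$ for some $w_j=\bigO(\varepsilon)$ — which is exactly the content you have not yet proved.
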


\begin{proof}
According to  Theorem \ref{thm:nextmulti}, we have
$L_{i}(H)_{\zz=\hat{\zz}}=\bigO(\varepsilon^2)$, for $0\leq i \leq
\mu-2$. Since
\begin{equation*}
 \Phi_k({L}_i) \in \Span (L_0,
\ldots, L_{\mu-2}),  ~{\text{for}}~ 1 \leq k \leq n,
\end{equation*}
we have
\begin{equation}\label{meq1}
\|{L}_i((z_k-\hat{z}_k)
H)_{\zz=\hat{\zz}}\|=\|\Phi_k({L}_i)(H)_{\zz=\hat{\zz}}\|=\bigO(\varepsilon^2),
  ~{\text{for}}~ 1 \leq k \leq n,  0 \leq i \leq \mu-1.
\end{equation}

The matrix in (\ref{Pk}) is of full rank. We solve
the linear system (\ref{Pk}) to obtain the vector $\vv=[v_1, \ldots,
v_n]^T$ such that $L_{\mu}(H)_{\zz=\hat{\zz}}=0$ for
\begin{equation}\label{updateLu}
L_{\mu}:=L_{\mu-1}+v_1\cdot P_{\mu}+v_2\cdot \frac{\partial
}{\partial z_2}+\cdots+v_{n}\cdot \frac{\partial }{\partial z_n}.
\end{equation}
It should be noticed that the vector $\vv$ satisfies
$\|\vv\|=\bigO(\varepsilon)$ since $\|L_{\mu-1}(H)_{\zz=\hat
\zz}\|=\bigO(\varepsilon)$. Moreover,
\begin{equation*}
 \Phi_k({L}_{\mu}) \in \Span (L_0,
\ldots, L_{\mu-2}),  ~{\text{for}}~ 2 \leq k \leq n,
\end{equation*}
we have
\begin{equation}
\|{L}_{\mu}((z_k-\hat{z}_k)
H)_{\zz=\hat{\zz}}\|=\|\Phi_k({L}_{\mu})(H)_{\zz=\hat{\zz}}\|=\bigO(\varepsilon^2),
  ~{\text{for}}~ 2 \leq k \leq n.
\end{equation}
For $k=1$, since
 $ \|\Phi_1(v_1 {P}_{\mu})(H)_{\zz=\hat \zz}\|=\|v_1 L_{\mu-1}(H)_{\zz=\hat \zz}\|=\bigO(\varepsilon^2)$,
we have
\begin{equation}\label{meq2}
\|{L}_{\mu}((z_1-\hat{z}_1)
H)_{\zz=\hat{\zz}}\|=\|\Phi_1({L}_{\mu})(H)_{\zz=\hat{\zz}}\|=\bigO(\varepsilon^2).
\end{equation}
From (\ref{meq1}) and (\ref{meq2}), for $i=0, 1,\ldots, \mu-2, \mu$,
we have
\begin{equation*}\label{etli}
 \|{L}_i
(p\cdot H)_{\zz=\hat{\zz}}\|=\bigO(\varepsilon^2), ~ \forall p\in
\{(z_1-\hat{z}_1)^{\alpha_1}\cdots (z_n-\hat{z}_n)^{\alpha_n},
\alpha_1 \geq 0, \ldots, \alpha_n \geq 0\}.
\end{equation*}
Especially, we have
\[\|M_{\mu+1}\cdot L_i (\vv(\zz)_{\mu})_{\zz=\hat{\zz}}\|=\bigO(\varepsilon^2),\]
where $M_{\mu+1}$ is the coefficient matrix of the Taylor expansion
of  the system $H$ and  all its prolongations up to the  degree
$\mu$ at $\hat \zz$, and
\[\vv(\zz)_{\mu}=\left[(z_1-\hat{z}_1)^{\mu},(z_1-\hat{z}_1)^{\mu-1}(z_2-\hat{z}_2),\ldots,z_1-\hat{z}_1,\ldots,z_n-\hat{z}_n,1\right]^T.\]

It is important to notice that, based on the closedness conditions,
we obtain the null space of $M_{\mu+1}$ with  matrices of size $n
\times n$ instead of generating the big matrix $M_{\mu+1}$.
 Similarly to the
analysis in \cite[Remark 18]{WuZhi:2009}, the trace of the
multiplication matrix $\widetilde{M}_{z_1}$ formed from approximate
null vectors $L_i(\vv(\zz)_{\mu})_{\zz=\hat{\zz}}$ has the following
property
\begin{equation}\label{doublecorrect}
\frac{1}{\mu} \Tr(\widetilde{M}_{z_1})=\frac{1}{\mu}
\Tr(M_{z_1})+\bigO(\varepsilon^2)=-\hat{z}_{1,\er}+\bigO(\varepsilon^2).
\end{equation}
It is interesting to notice that,  by using the approximate basis
$\{L_0,\ldots,L_{\mu-2},L_{\mu}\}$  and  the normal set
$\left\{1,\frac{\partial }{\partial
z_1},\ldots,\frac{\partial^{\mu-1} }{\partial z^{\mu-1}_1}\right\}$,
we can form the multiplication matrix
\begin{equation*}
\widetilde{M}_{z_1}\cdot \left[\begin{array}{cccc}
l_0~ &~ 0 ~ &~  \cdots ~& ~ 0\\
0~  &~ l_1 ~ &~ \ddots ~& ~ \vdots \\
\vdots~  &~  \ddots ~ &~  \ddots ~& ~ 0\\
0 ~&~ \cdots ~ &~ 0 ~ &~  l_{\mu-1}\\
 \end{array} \right]=\left[\begin{array}{ccccc}
0~ &~ l_1 ~ &~ 0 ~ &~  \cdots ~& ~ 0\\
0~  &~ 0 ~ &~ l_2 ~&~ \ddots ~& ~ \vdots \\
\vdots~  &~  \ddots ~&~  \ddots ~ &~  \ddots ~& ~ 0\\
\vdots ~&~  ~&~ \ddots ~ &~ \ddots ~ &~  l_{\mu-1}\\
0 ~&~ \cdots ~&~ \cdots ~ &~ 0 ~ &~  v_1\cdot l_{\mu-1}\\
 \end{array} \right],
\end{equation*}
where $l_i$ is the coefficient of $\frac{\partial^{i} }{\partial
z^{i}_1}$ in $L_{i}$. Hence, the trace of $\widetilde{M}_{z_1}$ is
$v_1$. Therefore,  there is no need to form the multiplication
matrix! According to (\ref{doublecorrect}), we have
\begin{equation}
\frac{v_1}{\mu}+\hat{z}_{1,\er}=\bigO(\varepsilon^2).
\end{equation}

 Since the last $n-1$ elements of $\hat{\zz}$ have
already been refined quadratically, by updating
$\hat{z}_1:=\hat{z}_1+\delta$ for $\delta:=\frac{v_1}{\mu}$,  we
have
\[ \|\hat{\xx}-\hat{\xx}_{\ex}\|= \| R \, ( \hat \zz- \hat
\zz_{\ex}) \|=\bigO(\varepsilon^2).
\]
\end{proof}

\begin{remark}
The algorithm {\sf MultipleRootRefinerBreadthOne} also works well
for some overdetermined polynomial systems, i.e., the number of
polynomials is bigger than the number of variables, see the last
example Menzel1 in  Table \ref{table1}.
\end{remark}

\section{Examples}\label{lsec:complexity}

The following experiments are done in Maple~13 under Linux for
$\text{Digits}:=15$. Let  $t$ and  $s$ be the number of polynomials
and variables respectively, $\mu$ be the multiplicity. Systems  DZ3,
Dayton2 and DLZ are quoted from \cite{Dayton:2007,
DLZ:2009,DaytonZeng:2005}, Menzel1 and SY5 are cited from
\cite{Menzel84} and \cite{ShenYpma05} respectively. Other examples
are cited from the PHCpack demos by Jan Verschelde.

\begin{table}[ht]
\begin{center}

\begin{tabular}
{|@{\hbox to +0.4em{\hss}} r@{\hbox to +0.4em{\hss}} |@{\hbox to
+0.4em{\hss}}c@{\hbox to +0.4em{\hss}} |@{\hbox to
+0.4em{\hss}}c@{\hbox to +0.4em{\hss}} |@{\hbox to
+0.4em{\hss}}c@{\hbox to +0.4em{\hss}} |@{\hbox to
+0.4em{\hss}}c@{\hbox to +0.4em{\hss}} |@{\hbox to
+0.4em{\hss}}l@{\hbox to +0.4em{\hss}} |} \hline
 System &Zero
&$t$ &$s$ &$\mu$   &\# Digits
\\
\hline
  Ojika1 & $(1,2)$ & $2$ & $2$ & $3$ &  $2 \rightarrow 5 \rightarrow 11 \rightarrow 15$\\
\hline
  Ojika2 & $(1,0,0)$ & $3$ & $3$ & $2$ & $2  \rightarrow 5 \rightarrow 10 \rightarrow 14$ \\
\hline
  Ojika3 & $(-2.5,2.5,1)$ & $3$ & $3$ & $2$  & $2 \rightarrow 4 \rightarrow 9 \rightarrow 14$ \\
\hline
  Ojika4 & $(0,0,10)$ & $3$ & $3$ & $3$  & $2 \rightarrow 3 \rightarrow 7 \rightarrow 13$ \\
\hline
  Decker2 & $(0,0)$ & $2$ & $2$ & $4$  & $2  \rightarrow 5 \rightarrow 15$\\
\hline
  DLZ & $ (0,0)$ & $2$ & $2$ & $10$      & $2 \rightarrow 5 \rightarrow 16$\\
\hline
  DZ3 & $(\frac{2\sqrt{7}}{5}+\frac{\sqrt{5}}{5},-\frac{\sqrt{7}}{5}+\frac{2\sqrt{5}}{5})$ & $2$ & $2$ & $5$     & $2 \rightarrow 5 \rightarrow 13$\\
\hline
  Dayton2 & $(0,0,0)$ & $3$ & $3$ & $5$ & $ 2  \rightarrow 3 \rightarrow 7 \rightarrow 13$ \\
\hline
  SY5 & $(1,1)$ & $2$ & $2$ & $2$ & $ 2  \rightarrow 5 \rightarrow 11 \rightarrow 14$ \\
\hline
  Menzel1 & $(1,1)$ & $3$ & $2$ & $2$ & $ 2  \rightarrow 5 \rightarrow 10 \rightarrow 14$ \\
\hline
\end{tabular}
\vspace{0.2cm}

 {\label{table1}Algorithm Performance}
\end{center}

\end{table}

\section{Conclusion}\label{lsec:conclusion}

It is  a challenge problem to solve the polynomial systems with
singular solutions. Various symbolic-numeric methods have been
proposed for refining  an approximate singular solution to high
accuracy~\cite{Corless:1997,DLZ:2009,DaytonZeng:2005,  GLSY:2005,
GLSY:2007,  Lecerf:2002, LVZ:2006,
 Ojika:1987, WuZhi:2008, WuZhi:2009}. The breadth one case root refinement  has been studied
in~\cite{DLZ:2009,DaytonZeng:2005,GLSY:2007,Griewank85}. In this
paper, we show how to apply strategies in \cite{LiZhi:2009} to
reduce the size of matrices appeared in
\cite{DaytonZeng:2005,WuZhi:2008} to obtain a more efficient
algorithm for refining an approximately known multiple root for this
special  case. We have proved the quadratic convergency of the new
algorithm  when the approximate solution is close to the isolated
exact singular solution. We also notice that when the singular
solution $\hat{\xx}_{\ex}$ is not well separated from other
solutions of $F$, it is  difficult to ensure that the approximate
solution $\hat \xx$ will   converge to $\hat{\xx}_{\ex}$. In
\cite{RumpGraillat:2009}, they described an algorithm for computing
verified error bounds for double  roots of polynomial systems.  We
will  explore  ways of computing the certified  bound for
$\varepsilon$ to guarantee the convergency of our algorithm. It is
also interesting to see whether the approach in the paper can be
generalized to refine singular solutions when the Jacobian matrix is
not of corank one.

\bibliographystyle{siam}
\bibliography{EKBib/strings,EKBib/kaltofen,EKBib/new,zhi,wuxiaoli,EKBib/crossrefs}

{\small
\paragraph{\bf Appendix}
%

In the following, we suppose that there are no differential
operators $D(i,0,\ldots, 0)$ for $i< k$ contained in $\tilde{L}_k$,
otherwise, we can reduce them by $\tilde{L}_i$. Here and hereafter,
we always assume the coefficient of $D(k,0,\ldots,0)$ is one.

\begin{proposition}\label{lemmaQk}
The formula
 (\ref{Qk}) is true  for $ 1 \leq k \leq \mu-1$.
\end{proposition}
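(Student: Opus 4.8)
The plan is to pass to the generating-polynomial encoding, in which an operator $L=\sum_\alpha c_\alpha D(\alpha)$ is represented by $\widehat L(\zz)=\sum_\alpha c_\alpha\,\zz^\alpha/\alpha!$. Under this correspondence $\Phi_j$ acts as $\partial/\partial z_j$, $\Psi_j$ acts as the antiderivative $\int_0^{z_j}$, the truncation $(\cdot)_{\alpha_1=\cdots=\alpha_{j-1}=0}$ deletes every monomial containing one of $z_1,\dots,z_{j-1}$, and, crucially, the composition $L\circ(\partial/\partial z_j)$ becomes ordinary multiplication $z_j\cdot\widehat L$; each of these is a one-line verification on a single $D(\alpha)$. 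Thus $(\ref{Qk})$ becomes the polynomial identity $\widehat Q_{k+1}=(k+1)\widehat P_{k+1}$, where $\widehat Q_{k+1}=z_1\widehat L_k+\sum_{j=2}^n z_j\sum_{m=2}^k m\,a_{m,j}\widehat L_{k+1-m}$; formula $(\ref{fixedkthcondition})$ reads $\widehat P_{k+1}=\int_0^{z_1}\widehat L_k\,dz_1+\sum_{j=2}^n\bigl[\int_0^{z_j}\widehat Q_{k+1,j}\,dz_j\bigr]$ with the deletions understood; and $(\ref{phikth})$ together with $\widehat L_i=\widehat P_i+\sum_{j\ge2}a_{i,j}z_j$ supplies $\partial\widehat P_{k+1}/\partial z_1=\widehat L_k$, $\partial\widehat P_{k+1}/\partial z_j=\widehat Q_{k+1,j}=\sum_{m=2}^k a_{m,j}\widehat L_{k+1-m}$ for $j\ge2$, and hence $\partial\widehat L_i/\partial z_1=\widehat L_{i-1}$, $\partial\widehat L_i/\partial z_j=\sum_{m=2}^i a_{m,j}\widehat L_{i-m}$ (with $\widehat L_0=1$, and the conventions $\widehat L_1=\widehat P_1:=z_1$, $C_1:=0$, where $C_m:=\sum_{l\ge2}a_{m,l}z_l$).

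I would then prove $\widehat Q_{k+1}=(k+1)\widehat P_{k+1}$ by strong induction on $k$. Both sides are polynomials vanishing at $\zz=0$ — every monomial of $\widehat Q_{k+1}$ carries a factor $z_i$, and every summand of $\widehat P_{k+1}$ is an antiderivative evaluated at $0$ — so it suffices to check that all first partials agree. The base case $k=1$ is the one-line identity $\widehat Q_2=z_1^2=2\widehat P_2$.

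In the inductive step the $z_1$-partial is the easy half: differentiating $\widehat Q_{k+1}$ and substituting $\partial\widehat L_i/\partial z_1=\widehat L_{i-1}$ turns $\partial\widehat Q_{k+1}/\partial z_1=(k+1)\widehat L_k$ into $z_1\widehat L_{k-1}+\sum_{j\ge2}z_j\sum_{m=2}^k m\,a_{m,j}\widehat L_{k-m}=k\widehat L_k$, which follows by adding the single missing term $k\sum_{j\ge2}a_{k,j}z_j$ to the inductive hypothesis $\widehat Q_k=k\widehat P_k$ and using $\widehat L_k=\widehat P_k+\sum_{j\ge2}a_{k,j}z_j$. The $z_j$-partials for $j\ge2$ are the crux and the main obstacle. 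Differentiating $\widehat Q_{k+1}$, inserting $\partial\widehat L_i/\partial z_j=\sum_m a_{m,j}\widehat L_{i-m}$, and reindexing the emerging double sum over pairs $(m,p)$, the outcome reorganizes into $\sum_{m=2}^k m\,a_{m,j}\widehat L_{k+1-m}+\sum_{p=2}^k a_{p,j}\widehat Q_{k+1-p}+\sum_{m=2}^{k-1}m\,a_{k+1-m,j}C_m$, the last ``boundary'' sum arising from the pairs with $m+p=k+1$. Applying the inductive hypothesis $\widehat Q_{k+1-p}=(k+1-p)\widehat P_{k+1-p}$ and, on the target side, writing $\widehat L_{k+1-m}=\widehat P_{k+1-m}+C_{k+1-m}$ in $(k+1)\sum_m a_{m,j}\widehat L_{k+1-m}$, the $\widehat P$-contributions cancel identically and what remains is the identity $\sum_{m=2}^{k-1}m\,a_{k+1-m,j}C_m=\sum_{m=2}^k(k+1-m)a_{m,j}C_{k+1-m}$, true term by term after the substitution $m\mapsto k+1-m$ (the surplus index $m=1$ on the right contributes nothing, since $C_1=0$). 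Hence every partial of $\widehat Q_{k+1}$ matches that of $(k+1)\widehat P_{k+1}$, and the two polynomials coincide. The genuine difficulty throughout is the bookkeeping in this last step — keeping the two coupled summation ranges and the $\widehat L_0$- and $C_1$-edge terms straight so that the $\widehat P$'s cancel cleanly.
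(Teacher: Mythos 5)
Your proof is correct. I worked through the dictionary (operator $\leftrightarrow$ generating polynomial, $\Phi_j \leftrightarrow \partial/\partial z_j$, $\Psi_j \leftrightarrow \int_0^{z_j}$, postcomposition with $\partial/\partial z_j \leftrightarrow$ multiplication by $z_j$), each of which is indeed a one-line check on a single $D(\alpha)$, and then the inductive step: the $z_1$-partial reduces exactly as you say via $\widehat Q_k = k\widehat P_k$ plus the single missing term $k\sum_{j\ge 2}a_{k,j}z_j$; and in the $z_j$-partial the split of the double sum along $m+p\le k$ versus $m+p=k+1$ correctly produces the three-term reorganization, the IH collapses the $\widehat Q_{k+1-p}$ summands, the $\widehat P$-contributions cancel, and the residual identity in the $C_m$'s is true term-by-term under $m\mapsto k+1-m$ once you observe $C_1=0$. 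Your conventions $\widehat L_0=1$, $\widehat P_1=z_1$, $C_1=0$ are exactly what is needed to keep the edge cases from breaking the decomposition $\widehat L_i=\widehat P_i+C_i$.

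The argument is the same double-reduction strategy the paper uses, but in a cleaner dress. The paper works with the raw coefficient arrays $C_{D(\alpha)}^{P_{k+1}}$ and $C_{D(\alpha)}^{Q_{k+1}}$ and proves equality coefficient-by-coefficient, splitting into four cases according to whether $\alpha_1=0$ and whether $|\alpha|=2$, and uses the relations $(\ref{coeff})$ — which are exactly the coefficient shadow of your $\partial\widehat P_{k+1}/\partial z_i$ formulas — to push the comparison down in order. You instead pass to polynomials, observe both sides vanish at $\zz=0$, and compare all $n$ first partials, which compresses the four cases to two ($\partial/\partial z_1$ and $\partial/\partial z_j$, $j\ge 2$) and replaces the $|\alpha|=2$ boundary cases by the $C_1=0$ and $\widehat L_0=1$ edge conventions. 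What the polynomial encoding buys is that the double sum over $(m,p)$ and the boundary slice $m+p=k+1$ can be handled globally in one reindexing, instead of being re-derived separately inside each of the paper's cases 3 and 4. The mathematical content — strong induction on $k$, with the closedness relations driving a recursive descent — is identical.
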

\begin{proof}
From (\ref{Q1}), we know  that (\ref{Qk}) is true for $k=1$. Now let
us assume (\ref{Qk}) is true for $k$.
 Let $C_{D(\alpha)}^{P_{k+1}}$ and $C_{D(\alpha)}^{Q_{k+1}}$
denote the coefficients of $D(\alpha)$ in $P_{k+1}$ and $Q_{k+1}$
respectively.
 In order to prove (\ref{Qk}) for $k+1$,
we   show that
$C_{D(\alpha)}^{Q_{k+1}}=(k+1)C_{D(\alpha)}^{P_{k+1}}$ by using the
following relations repeatedly:
\begin{equation}\label{coeff}
\left\{
\begin{array}{ll}
C_{D(\alpha)}^{P_{k+1}}=C_{\Phi_{1}(D(\alpha))}^{L_{k}}, & ~\alpha_1\neq 0,\\
C_{D(\alpha)}^{P_{k+1}}=a_{2,j}C_{\Phi_{j}(D(\alpha))}^{L_{k-1}}+\cdots+a_{k,j}C_{\Phi_{j}(D(\alpha))}^{L_{1}}, & ~\alpha_j\neq 0.\\
\end{array}
\right.
\end{equation}

 Let $D(\alpha)=D(\alpha_1, \ldots, \alpha_n)$,
 denote $j_1=\cdots=j_{\alpha_1}=1,\ldots,j_{|\alpha|-\alpha_i+1}=\cdots=j_{|\alpha|}=i$, where  $\alpha_i$
 is the last nonzero entry in $\alpha$, e.g.,
$j_1=j_2=j_3=1,j_4=j_5=2,j_6=3$ for $D(\alpha)=D(3,2,1,0)$. Since
all derivatives in $Q_{k+1}$ and $P_{k+1}$ are of order at least
$2$, we can start with $|\alpha| = 2$.

\begin{enumerate}
\item   If $\alpha_1=0$ and $|\alpha|=2$, then  we have
\begin{align*}
C_{D(\alpha)}^{Q_{k+1}}=&2a_{2,j_1}a_{k-1,j_2}+\cdots+(k-1)a_{k-1,j_1}a_{2,j_2}\\
&+2a_{2,j_2}a_{k-1,j_1}+\cdots+(k-1)a_{k-1,j_2}a_{2,j_1}\\
=&(k+1)(a_{2,j_1}a_{k-1,j_2}+\cdots+a_{k-1,j_1}a_{2,j_2})\\
=&(k+1)C_{D(\alpha)}^{P_{k+1}}.
\end{align*}
\item   If $\alpha_1\neq0$ and $|\alpha|=2$,
\[C_{D(\alpha)}^{Q_{k+1}}=a_{k,j_1}+k\,a_{k,j_1}=(k+1)a_{k,j_1}=(k+1)C_{D(\alpha)}^{P_{k+1}}.\]
\item   If $\alpha_1\neq0$ and $|\alpha|>2$, by induction,
\begin{align*}
&C_{\Phi_{z_1}(D(\alpha))}^{Q_{k}}=kC_{\Phi_{1}(D(\alpha))}^{P_{k}}\\
&=(\alpha_1-1)C_{\Phi_{1}^2(D(\alpha))}^{L_{k-1}}+ \sum_{j=2}^n
\alpha_j\left(2a_{2,j}C_{\Phi_{1}\Phi_{j}(D(\alpha))}^{L_{k-2}}+\cdots+(k-1)a_{k-1,j}C_{\Phi_{1}\Phi_{j}(D(\alpha))}^{L_{1}}\right).
\end{align*}
While based on (\ref{coeff}), we have
$C_{D(\alpha)}^{P_{k+1}}=C_{\Phi_{1}(D(\alpha))}^{L_{k}}$ and
\begin{align*}
&C_{D(\alpha)}^{Q_{k+1}}=\alpha_1C_{\Phi_{1}(D(\alpha))}^{P_{k}}+\sum_{j=2}^n
\alpha_j\left(2a_{2,j}C_{\Phi_{j}(D(\alpha))}^{P_{k-1}}+\cdots+k  a_{k,j}C_{\Phi_{j}(D(\alpha))}^{P_{1}}\right)\\
&=\alpha_1C_{\Phi_{1}^2(D(\alpha))}^{L_{k-1}}+\sum_{j=2}^n
\alpha_j\left(2a_{2,j}C_{\Phi_{1}\Phi_{j}(D(\alpha))}^{L_{k-2}}+\cdots+(k-1)a_{k-1,j}C_{\Phi_{1}\Phi_{j}(D(\alpha))}^{L_{1}}\right)\\
&=(k+1)C_{\Phi_{1}(D(\alpha))}^{P_{k}}=(k+1)C_{D(\alpha)}^{P_{k+1}}.
\end{align*}
\item   If $\alpha_1=0$ and $|\alpha|>2$, we have
\begin{align*}
C_{D(\alpha)}^{Q_{k+1}}=&2a_{2,j_1}C_{\Phi_{j_1}(D(\alpha))}^{P_{k-1}}+\cdots+k a_{k,j_1}C_{\Phi_{j_1}(D(\alpha))}^{P_{1}}\\
&+\cdots+2a_{2,j_{|\alpha|}}C_{\Phi_{j_{|\alpha|}}(D(\alpha))}^{P_{k-1}}+\cdots+k\cdot
a_{k,j_{|\alpha|}}C_{\Phi_{j_{|\alpha|}}(D(\alpha))}^{P_{1}}.
\end{align*}
By (\ref{coeff}), we have
$C_{\Phi_{j}(D(\alpha))}^{P_{k}}=a_{2,j}C_{\Phi_{j}(D(\alpha))}^{L_{k-1}}+\cdots+a_{k-1,j}C_{\Phi_{j}(D(\alpha))}^{L_{1}}.$

For $2\leq p\leq k$, we  collect all items with respect to
$a_{p,j_1}$:
\begin{align*}
&p
a_{p,j_1}C_{\Phi_{j_1}(D(\alpha))}^{L_{k-p+1}}+a_{p,j_1}\left(2a_{2,j_2}C_{\Phi_{j_1}\Phi_{j_2}(D(\alpha))}^{L_{k-p-1}}
+\cdots+(k-p) a_{k-p,j_2}C_{\Phi_{j_1}\Phi_{j_2}(D(\alpha))}^{L_{1}}\right)\\
&+\cdots+a_{p,j_1}\left(2a_{2,j_{|\alpha|}}C_{\Phi_{j_1}\Phi_{j_{|\alpha|}}(D(\alpha))}^{L_{k-p-1}}+\cdots+(k-p)\cdot
a_{k-p,j_{|\alpha|}}C_{\Phi_{j_1}\Phi_{j_{|\alpha|}}(D(\alpha))}^{L_{1}}\right)\\
&=p\cdot
a_{p,j_1}C_{\Phi_{j_1}(D(\alpha))}^{L_{k-p+1}}+(k-p+1)a_{p,j_1}C_{\Phi_{j_1}(D(\alpha))}^{L_{k-p+1}}=(k+1)a_{p,j_1}C_{\Phi_{j_1}(D(\alpha))}^{L_{k-p+1}}.
\end{align*}
Hence, $C_{D(\alpha)}^{Q_{k+1}}=(k+1)\sum_{p=2}^k
a_{p,j_1}C_{\Phi_{j_1}(D(\alpha))}^{L_{k-p+1}}=(k+1)C_{D(\alpha)}^{P_{k+1}}$.
\end{enumerate}
Hence, (\ref{Qk}) is true for $ 1 \leq k \leq \mu-1$.
\end{proof}

\begin{proposition}\label{prop3}
 In the proof of Theorem \ref{thm:nextmulti}, we claim that
$\Gamma_{\bar{R}}(\bar{L}_i)$  is a linear combination of
$\{\tilde{L}_0,\tilde{L}_1,\ldots,\tilde{L}_i\}$.
\end{proposition}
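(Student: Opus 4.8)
The plan is to argue purely at the level of differential order, using the grading of $\Span_{\mathbb{C}}(\mathfrak{D})$. First I would record three structural facts. (a) The operator $\Gamma_{\bar R}$ is homogeneous for differential order: by the formula for $\Gamma_R(D(\alpha))$ in the proof of Theorem~\ref{lineartrans}, its image is a $\mathbb{C}$-combination of the $D(\beta)$ with $|\beta|=|\alpha|$, so $\Gamma_{\bar R}$ maps the span of $\{D(\alpha):|\alpha|\le m\}$ into itself for every $m$. (b) The operator $\bar L_i$ produced by Theorem~\ref{LiZhi} has differential order exactly $i$: one inducts using (\ref{fixedkthcondition})--(\ref{phikth}), noting that $\Psi_1(L_{k-1})$ has order $k$ (with leading monomial $D(k,0,\ldots,0)$) while the remaining $\Psi_j$-terms, built from $Q_{k,j}$ of order $\le k-2$, have order $\le k-1$. (c) Likewise each $\tilde L_j$ constructed in Theorem~\ref{dflNo} has differential order exactly $j$ and contains $D(j,0,\ldots,0)$, as already observed in that proof. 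Facts (a) and (b) give that $\Gamma_{\bar R}(\bar L_i)$ has differential order at most $i$.

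Next I would invoke the hypothesis, already established inside the proof of Theorem~\ref{thm:nextmulti}, that $\{\tilde L_0,\ldots,\tilde L_{k-1}\}$ and $\{\Gamma_{\bar R}(\bar L_0),\ldots,\Gamma_{\bar R}(\bar L_{k-1})\}$ are two bases of the same space, so that $\Gamma_{\bar R}(\bar L_i)=\sum_{j=0}^{k-1}c_j\tilde L_j$ for suitable scalars $c_j$. Then I run a leading-order comparison: assume for contradiction that $c_j\ne 0$ for some $j>i$, and let $j^{\ast}$ be the largest such index. Applying the linear projection onto the homogeneous degree-$j^{\ast}$ part, the terms $\tilde L_j$ with $j<j^{\ast}$ drop out (they have order $<j^{\ast}$), while those with $j>j^{\ast}$ have coefficient $0$; hence the degree-$j^{\ast}$ part of the right side equals $c_{j^{\ast}}$ times the degree-$j^{\ast}$ part of $\tilde L_{j^{\ast}}$, which is nonzero by (c). But the left side has order at most $i<j^{\ast}$, so its degree-$j^{\ast}$ part is $0$, a contradiction. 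Therefore $c_j=0$ for all $j>i$, i.e. $\Gamma_{\bar R}(\bar L_i)\in\Span\{\tilde L_0,\ldots,\tilde L_i\}$, which is the claim.

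A shorter packaging of the same idea, which I might use instead, is to note that in the breadth-one case Lemma~\ref{lemma:corankone} makes the subspace of order-$\le i$ elements of the (approximate) Max Noether space exactly $(i+1)$-dimensional, hence equal to $\Span\{\tilde L_0,\ldots,\tilde L_i\}$; since $\Gamma_{\bar R}(\bar L_i)$ lies in that Max Noether space and has order $\le i$ by (a)--(b), it lies in $\Span\{\tilde L_0,\ldots,\tilde L_i\}$. I expect the only real work here is the bookkeeping in (a)--(c) --- reading off differential orders from the recursive formulas --- rather than anything deep; no $\varepsilon$-estimate enters. The one point to check with care is that differential order is a genuine grading (not merely a filtration) under $\Gamma_{\bar R}$, but the explicit formula for $\Gamma_{\bar R}(D(\alpha))$ settles exactly that.
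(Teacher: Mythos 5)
Your argument is correct, and it takes a genuinely different route from the paper's. The paper proceeds by induction on $i$: it reduces $\Gamma_{\bar{R}}(\bar{L}_i)$ modulo the lower-order $\tilde{L}_j$, applies $\Phi_1$, and uses the closedness of the approximate Max Noether space together with the uniqueness built into Theorem~\ref{LiZhi}'s recursion to identify the reduced operator as $\frac{1}{a^i}\tilde{L}_i$ exactly. You instead exploit the grading by differential order: $\Gamma_{\bar{R}}$ sends $D(\alpha)$ to a combination of $D(\beta)$ with $|\beta|=|\alpha|$, both $\bar{L}_i$ and $\tilde{L}_i$ have order exactly $i$, and a leading-order comparison in $\Gamma_{\bar{R}}(\bar{L}_i)=\sum_j c_j\tilde{L}_j$ immediately forces $c_j=0$ for $j>i$. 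This is cleaner: no reduction step, no induction beyond the bookkeeping that each $\tilde{L}_j$ contains $D(j,0,\ldots,0)$. It also gives, as a free by-product you should make explicit, that $c_i\neq 0$ (the degree-$i$ part of $\Gamma_{\bar{R}}(\bar{L}_i)$ is nonzero, being the image under the invertible graded map $\Gamma_{\bar{R}}$ of the degree-$i$ part of $\bar{L}_i$); this nonvanishing is precisely what is needed downstream in the proof of Theorem~\ref{thm:nextmulti} to pass from $\|\Gamma_{\bar{R}}(\bar{L}_i)(G)\|=\bigO(\varepsilon^2)$ to $\|\tilde{L}_i(G)\|=\bigO(\varepsilon^2)$, and the paper's proof supplies it in the explicit form $c_i=\frac{1}{a^i}$. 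One small caution on your ``shorter packaging'': Lemma~\ref{lemma:corankone} is a statement about the exact Max Noether space, so invoking it for the approximate span $\Span\{\tilde{L}_0,\ldots,\tilde{L}_{k-1}\}$ is not quite licensed; but you do not actually need it, since the dimension count of the order-$\le i$ part of that span follows directly from the fact that each $\tilde{L}_j$ has order exactly $j$, which is your fact (c).
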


\begin{proof}
For $i=2$, since
$\Gamma_{\bar{R}}(\bar{L}_1)=\frac{1}{a}\tilde{L}_1$, we can reduce
$\Gamma_{\bar{R}}(\bar{L}_2)$ by $\tilde{L}_1$ to a differential
operator which does not contain  $D(1,0,\ldots,0)$,  denoted by
$\bar{\Gamma}_{\bar{R}}(\bar{L}_2)$. Since $\bar{L}_2$ is
constructed by Theorem \ref{LiZhi} from $\bar{L}_0=D(0,\ldots,0)$
and $\bar{L}_1=D(1,0,\ldots,0)$, $D(2,0\ldots,0)$ is the only second
order  derivative contained in $\bar{L}_2$ with coefficient one
\cite[Lemma 3.3]{LiZhi:2009}. Moreover, by Theorem
\ref{lineartrans}, the coefficient of $D(2,0\ldots,0)$ in
$\Gamma_{\bar{R}}(\bar{L}_2)$ is $\frac{1}{a^2}$. Since
$\bar{\Gamma}_{\bar{R}}(\bar{L}_2)$ is an approximate basis, due to
the closedness,  we have
\begin{equation*}
\Phi_{1}(\bar{\Gamma}_{\bar{R}}(\bar{L}_2))=\frac{1}{a^2}\tilde{L}_{1}.
\end{equation*}
Therefore, we have
\[\bar{\Gamma}_{\bar{R}}(\bar{L}_2)=\frac{1}{a^2}\tilde{L}_{2}.\]
Hence, $\Gamma_{\bar{R}}(\bar{L}_2)$ is a linear combination of
$\{\tilde{L}_0, \tilde{L}_1,\tilde{L}_2\}$. Let us assume that for
all $0\leq i\leq k-1$, the proposition is true.  We can reduce
$\Gamma_{\bar{R}}(\bar{L}_{k})$ by $\{\tilde{L}_0,
\tilde{L}_1,\ldots,\tilde{L}_{k-1}\}$ to a differential operator
which does not contain differential operators $D(i,0,\ldots,0)$,
denoted by $\bar{\Gamma}_{\bar{R}}(\bar{L}_{k})$. Since
$\bar{\Gamma}_{\bar{R}}(\bar{L}_k)$ is an approximate basis, due to
the closedness,  we have
\begin{equation*}
\Phi_{1}(\bar{\Gamma}_{\bar{R}}(\bar{L}_k))=\frac{1}{a^k}\tilde{L}_{k-1},
\end{equation*}
therefore,
$\bar{\Gamma}_{\bar{R}}(\bar{L}_k)=\frac{1}{a^k}\tilde{L}_{k}$.
Hence, $\Gamma_{\bar{R}}(\bar{L}_{k})$ is a linear combination of
$\{\tilde{L}_0,\tilde{L}_1,\ldots,\tilde{L}_{k}\}$.
\end{proof}

}
\end{document}